\title[Asynchronous Stochastic Approximation and $Q$-Learning]{Finite-Time Analysis of Asynchronous Stochastic Approximation and $Q$-Learning}
\newtheorem{assumption}{Assumption}
\newcommand{\E}{\mathbb{E}}
\newcommand{\R}{\mathbb{R}}
\newcommand{\martingale}{w}
\newcommand{\F}{\mathcal{F}}
\newcommand{\statespace}{\mathcal{N}}
\newcommand*{\qed}{\hfill\ensuremath{\blacksquare}}%
\newcommand{\operatorbound}{C}
\newcommand{\vmin}{\underline{v}}
\newcommand{\stationarydist}{{\mu}}
\newcommand{\tmix}{t_{\textsc{mix}}}
\begin{document}

\maketitle

\begin{abstract}%
  We consider a general asynchronous Stochastic Approximation (SA) scheme featuring a weighted infinity-norm contractive operator, and prove a bound on its finite-time convergence rate on a single trajectory. Additionally, we specialize the result to asynchronous $Q$-learning. The resulting bound matches the sharpest available bound for synchronous $Q$-learning, and improves over previous known bounds for asynchronous $Q$-learning.
\end{abstract}

\begin{keywords}%
  Stochastic approximation, $Q$-learning, finite time analysis.
\end{keywords}

\section{Introduction}
Reinforcement learning (RL) has received renewed interest recently due to its remarkable successes in diverse areas. 
Many RL algorithms can be viewed through the lens of Stochastic Approximation (SA) \citep{robbins1951stochastic}.  SA algorithms are widely used beyond RL in areas such as machine learning, stochastic control, signal processing, and communications and, as a result, there is a broad and deep literature focused on the analysis and applications of SA that has developed a rich class of ODE-based tools for proving convergence of SA schemes, e.g., see the books \citet{borkar2009stochastic,benveniste2012adaptive}. In the context of RL, it has been shown that linear SA captures TD-learning and that the ODE-based SA framework can be used to prove the convergence of TD-learning \citep{tsitsiklis1997analysis}. A similar connection can be found in the case of actor-critic methods 
\citep{rl_konda2000actor,konda2003linear}. 

Most of the classical analysis in SA is asymptotic in nature; however this has changed recently.  Driven by the interest in finite-time convergence of RL methods, the focus has shifted to non-asymptotic analysis of SA schemes. For example, in just the past year, a finite-time bound for linear SA is given in \citet{srikant2019finite}, which leads to finite time error bounds for TD-learning, and a finite-time bound for a linear two time scale SA model is given in \citet{gupta2019finite,doan2019finite,xu2019two}, which leads to finite-time error bounds for the gradient TD method.  These results can be viewed as extensions of the classical ODE-based SA framework, which requires the SA algorithm to admit a ``limiting'' ODE associated with a Lyapunov function that certifies stability. 

While ODE-based approaches are powerful, there are popular classes of nonlinear SA schemes featuring a nonlinear operator with infinity-norm contraction that cannot be directly analyzed from the ODE-based SA framework \citep{tsitsiklis1994asynchronous,bertsekas1996neuro}. This class of SA methods captures a particularly important class of RL methods, the Watkin's $Q$-learning method \citep{watkins1992q}, and so understanding the behavior of this class of SA schemes is important for understanding the finite-time behavior of $Q$-learning. Over the past year, progress has been made toward the finite-time analysis of these nonlinear SA schemes. In particular, \citet{shah2018q} provides  a finite-time convergence result for SA with an infinity-norm contractive operator, and \citet{wainwright2019stochastic} provides sharp convergence rates for SA with a cone-contractive operator. However, both of these works consider the \emph{synchronous} case, i.e., at each time all entries of the iterate are updated. This is a significant limitation since, in many applications, e.g., $Q$-learning on a single trajectory, the update is \emph{asynchronous}, i.e., only one of the entries is updated at a time. This leads to the following question, which is the focus of this paper: 
\begin{center}\emph{What is the finite-time convergence rate for asynchronous SA/$Q$-learning on a single trajectory?}\end{center}

\textbf{Contribution. }In this paper, we provide a finite-time analysis of asynchronous nonlinear SA schemes featuring a weighted infinity norm contraction. We prove an $O\left(\frac{1}{(1-\gamma)^{1.5}}\frac{1}{\sqrt{T}}\right)$ convergence rate in weighted infinity-norm for the SA scheme, where $\gamma$ is the contraction coefficient (Theorem~\ref{thm:main}). Notably, our results 
are sharper than the result in the synchronous case in \citet[Thm. 5]{shah2018q}.\footnote{As another related work \citet{wainwright2019stochastic} does not provide an explicit bound for the synchronous SA scheme, we can only compare with \citet{wainwright2019stochastic} in the context of $Q$-learning. }   

As a direct consequence, our result shows a $\tilde{O}(\frac{1}{(1-\gamma)^5}\frac{1}{\epsilon^2})$ convergence time to reach an $\varepsilon$-accurate (measured in infinity-norm) estimate of the $Q$-function for the asynchronous $Q$-learning method on a single trajectory in the infinite horizon $\gamma$-discounted MDP setting (Theorem~\ref{thm:q}). This result matches the sharpest known bound for synchronous $Q$-learning \citep{wainwright2019stochastic}, and to the best of our knowledge, improves over the best known finite-time bounds on asynchronous $Q$-learning \citep{even2003learning} on a single trajectory in terms of its dependence on $\frac{1}{\varepsilon}$, $\frac{1}{1-\gamma}$, and the state-action space size. Further, our results clarify a blow-up phenomenon in the asynchronous $Q$-learning literature where the error can blow up exponentially in $\frac{1}{1-\gamma}$. We show such a blow-up can be avoided by using a rescaled linear step size.  This is consistent with related findings in other settings \citep{jin2018q,wainwright2019stochastic}. 

Our proof technique is different from those in the literature, e.g.,  \citet{even2003learning,shah2018q,wainwright2019stochastic}. Specifically, we do not use an epoch-based analysis, as in \citet{even2003learning,shah2018q}, where the error is controlled epoch-by-epoch. Instead, we decompose the error in a recursive manner, and this decomposition provides a more transparent approach for analyzing how the stochastic noise impacts the approximation error. This ultimately leads to a sharper bound. Further, our approach for handling asynchronicity is very different from \citet{even2003learning} and is partially inspired by the ``drift'' analysis in the ODE-based SA literature \citet{srikant2019finite}. 

\textbf{Related Work. } 
Our results provide new insights about $Q$-learning and more generally, SA with an infinity-norm contractive operator. $Q$-learning was first proposed in \citet{watkins1992q}. Its asymptotic convergence has been proven in \citet{tsitsiklis1994asynchronous,jaakkola1994convergence}, where its connection to SA with infinity-norm contractive operator was established. The first work on non-asymptotic analysis of $Q$-learning is \citet{szepesvari1998asymptotic}, which focused on an i.i.d.\ setting.  A generalization beyond the i.i.d. setting was provided by \citet{even2003learning}, which proves finite-time bounds for synchronous and asynchronous $Q$-learning with polynomial and linear step sizes. Both \citet{szepesvari1998asymptotic} and \citet{even2003learning} discover that, when using a linear step size, there is an exponential blow-up in $\frac{1}{1-\gamma}$, where $\gamma$ is the discounting factor; further, in the asynchronous setting, there is at least cubic dependence on the state-action space size \citep[Thm. 4]{even2003learning}. Subsequently, \citet{azar2011speedy} proposes speedy $Q$-learning, a variant of synchronous $Q$-learning, by adding a momentum term, and shows it avoids the exponential blow-up with a finite time bound that scales in $\frac{1}{(1-\gamma)^4\epsilon^2}$. More recently, \citet{shah2018q,wainwright2019stochastic} provide finite time bounds for general synchronous SA, which indicates that even in the classical $Q$-learning setup, the exponential blow-up can be avoided by using a rescaled linear step size. Specifically, \citet{wainwright2019stochastic} shows a finite time bound for synchronous $Q$-learning that scales in  $\frac{1}{(1-\gamma)^5\epsilon^2}$. To the best of our knowledge, this is the sharpest known bound for synchronous $Q$-learning. Compared with the above papers, our result bridges the gap between the understanding of \emph{synchronous} SA/$Q$-learning and \emph{asynchronous} SA/$Q$-learning. Our finite time bounds for asynchronous $Q$-learning match the sharpest known scaling in $\frac{1}{(1-\gamma)}$ and $\frac{1}{\varepsilon}$ in synchronous $Q$-learning. Further, compared with the best known bounds for asynchronous $Q$-learning \citep{even2003learning}, our result improves the dependence on state-action space size from (at least) cubic to square. Additionally, our work presents a new analytic approach.

Other related work on SA and $Q$-learning include \cite{lee2019unified}, which combines the ODE-based SA framework with the switch system theory to show the asymptotic convergence of asynchronous $Q$-learning in an i.i.d. setting; \citet{beck2012error}, which studies the finite time error bound of constant step size $Q$-learning; and \citet{melo2008analysis,chen2019performance}, which analyze $Q$-learning with linear function approximation. 

We also mention that there are other lines of work on $Q$-learning focusing on different models and performance measures. One line of work seeks to propose variants of $Q$-learning, e.g. recent work \citet{wainwright2019variance} that achieves a minimax optimal rate.  Earlier examples include \citet{hasselt2010double,azar2013minimax,sidford2018near,sidford2018variance,devraj2017zap,kearns1999finite}. Compared to these papers, our work focuses on general asynchronous SA and seeks to understand the convergence of the classical form of asynchronous SA/$Q$-learning. Another related line of work on $Q$-learning focuses on proving bounds on regret, e.g.  \citet{strehl2006pac,jin2018q,dong2019q,wei2019model}. Regret is a fundamentally different goal than providing finite-time convergence bounds, and the results and techniques across the two communities are quite different. The reason is that regret bound results need to address the problem of exploration, and the performance metric focuses on the transient performance, without the need to approximate \emph{every} entry of $Q$-function to the same accuracy.  In contrast, infinity-norm finite-time error bound results typically assume a form of sufficient exploration (e.g. the i.i.d. assumption used in \citet{szepesvari1998asymptotic,lee2019unified} and the covering time assumption used in \citet{even2003learning}) and require every entry of the $Q$-function to be accurately estimated.




\section{Finite-Time Analysis of Stochastic Approximation}\label{sec:SA}
In this section, we present our results on the finite-time analysis of asynchronous SA with a (weighted) infinity-norm contractive operator. We apply the results in this section to $Q$-learning in Section~\ref{sec:q}.


To begin, we formally define the problem setting.  Let $\statespace = \{1,\ldots,n\}$, $x\in \R^{\statespace}$, and $F:\R^\statespace\rightarrow \R^\statespace$ is an operator. We use $F_i$ to denote the $i$'th entry of $F$. 
We consider the following stochastic approximation scheme that keeps updating $x(t)\in \R^\statespace$ starting from $x(0) $ being the all zero vector, 
\begin{align}
    x_{i}(t+1)& =  x_{i}(t) + \alpha_t(F_{i}(x(t)) - x_{i}(t) + \martingale(t)) &\text{ for } i=i_t, \label{eq:x_update_1}\\
    x_i(t+1) &= x_i(t) &\text{ for } i\neq i_t, \label{eq:x_update_2}
\end{align}
where $i_t\in \statespace$ is a stochastic process adapted to a filtration $\F_t$, and $w(t)$ is some noise that we will discuss later. As we show in Section~\ref{sec:q}, this stochastic approximation scheme captures the asynchronous $Q$-learning algorithm. 

Given the setting described above, the following assumptions underlie our main result. Similar to \citet{tsitsiklis1994asynchronous}, the first assumption is concerned with the contraction of $F$ in a weighted infinity norm, which we define in Definition~\ref{def:vnorm}. The reason that we consider the weighted infinity norm instead of the standard infinity norm is that its generality will capture not just the discounted case $Q$-learning, but also the undiscounted case, as shown by \citet[Sec. 7]{tsitsiklis1994asynchronous}. 

\begin{definition}[Weighted Infinity Norm] \label{def:vnorm}
Given a positive vector $v = [v_1,\ldots,v_n]^\top \in\R^{\statespace}$, the weighted infinity norm $\Vert \cdot\Vert_v$ is given by $\Vert x\Vert_v = \sup_{i\in\statespace} \frac{|x_i|}{v_i}$. 
\end{definition}

\noindent Throughout the rest of the section, we fix a positive vector $v \in \R^n $ and all the norms in the section are in $\Vert\cdot\Vert_v$. We also denote $\vmin = \inf_{i\in\statespace} v_i$, the smallest entry of $v$. We comment that when $v$ is a all one vector, $\Vert\cdot\Vert_v$ becomes the standard infinity norm. We use the following result frequently on the induced matrix norm of $\Vert\cdot\Vert_v$, the proof of which can be found in Appendix~\ref{appendix:vnorm}. 

\begin{proposition}\label{prop:vnorm}
The induced matrix norm of $\Vert\cdot\Vert_v$ for a matrix $A = [a_{ij}]_{i,j\in\statespace}$ is given by $\Vert A\Vert_v = \sup_{i\in\statespace} \sum_{j\in \statespace} \frac{v_j}{v_i} |a_{ij}|  $. When $A$ is a diagonal matrix, $\Vert A\Vert_v = \sup_{i\in\statespace} |a_{ii}|$. 
\end{proposition}

\noindent With these preparations, we are now ready to state Assumption~\ref{assump:contraction} on the contraction property of $F$.  This assumption is standard in the literature, e.g., \citep{tsitsiklis1994asynchronous,wainwright2019stochastic},\footnote{\citet{wainwright2019stochastic} considers contraction in a gauge norm associated with a cone, which is more general than the weighted infinity norm.} and is satisfied by the $Q$-learning algorithm as will be shown in Section~\ref{sec:q}. Note that, as a consequence of Assumption~\ref{assump:contraction}, $F$ has a unique fixed point $x^*$. We also note that we do not require the monotonicity assumption needed in \citet{wainwright2019stochastic}.

\begin{assumption}[Contraction]\label{assump:contraction} (a) Operator $F$ is $\gamma$ contraction in $\Vert\cdot\Vert_v$, i.e. for any $x,y\in\R^\statespace$, $\Vert F(x) - F(y)\Vert_v \leq \gamma \Vert x - y\Vert_v$. (b) There exists some constant $\operatorbound>0$ s.t. $\Vert F(x)\Vert_v \leq \gamma \Vert x\Vert_v + \operatorbound, \forall x\in\R^{\statespace}$.
\end{assumption}
Assumption~\ref{assump:contraction}(a) directly implies Assumption~\ref{assump:contraction}(b) with $C = (1+\gamma)\Vert x^*\Vert_v$.\footnote{To see this, note $\Vert F(x)\Vert_v \leq \Vert F(x) - F(x^*)\Vert_v + \Vert F(x^*)\Vert_v \leq \gamma \Vert x - x^*\Vert_v + \Vert x^*\Vert_v \leq \gamma\Vert x\Vert_v + (1+\gamma) \Vert x^*\Vert_v$.} We write Assumption~\ref{assump:contraction}(b) as a separate assumption since, in some applications (e.g. $Q$-learning), the constant $\operatorbound$ can be better than $(1+\gamma)\Vert x^*\Vert_v$. Our next assumption concerns the noise sequence $w(t)$.  It is also standard \citep{shah2018q} and is satisfied by $Q$-learning.

\begin{assumption}[Martingale Difference Sequence]\label{assump:martingale} $\martingale(t)$ is $\F_{t+1}$measurable and satisfies $\E \martingale(t) |\F_t = 0$. 
Further, $|w(t)|\leq \bar{w}$ almost surely for some constant $\bar{w}$.
\end{assumption}

\noindent Lastly, we make an assumption regarding the stochastic process $i_t$. 
\begin{assumption}[Sufficient Exploration] \label{assump:exploration}
There exists a $ \sigma\in (0,1)$ and positive integer, $\tau$, such that, for any $i\in \statespace$ and $t\geq\tau$, 
$\mathbb{P}(i_{t} = i|\mathcal{F}_{t-\tau}) \geq \sigma$.
\end{assumption}

\noindent Assumption~\ref{assump:exploration} means that, given the history up to $t-\tau$, the distribution of $i_t$ must have positive probability for every $i$. Its purpose is to ensure every $i$ is visited by $i_t$ sufficiently often. We note that Assumption~\ref{assump:exploration} is more general than many typical ergodicity assumptions used in the SA literature, e.g., \citet{srikant2019finite}. For example, the following proposition shows that if $i_t$ is an ergodic Markov chain on state space $\statespace$, then Assumption~\ref{assump:exploration} is automatically true with $\sigma$ and $\tau$ depending on the stationary distribution and the mixing time of the Markov chain, where the mixing time refers to the minimum time it takes to reach within $1/4$ total variation distance of the stationary distribution regardless of the initial state \citep[Sec. 4.5]{levin2017markov}. The proof of Proposition~\ref{prop:ergodic} can be found in Appendix~\ref{appendix:ergodic}.

\begin{proposition}\label{prop:ergodic}
If $i_t$ is a ergodic Markov chain on state space $\statespace$ with stationary distribution $\stationarydist$ and mixing time $\tmix$, then Assumption~\ref{assump:exploration} holds with $\sigma = \frac{1}{2}\stationarydist_{\min}$, where $\stationarydist_{\min} = \min_{i\in\statespace} \stationarydist_{i}$, and $\tau = \lceil \log_2(\frac{2}{\stationarydist_{\min}} )\rceil \tmix $.  
\end{proposition}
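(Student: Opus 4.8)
The plan is to reduce Assumption~\ref{assump:exploration} to a standard worst-case mixing bound for the chain $i_t$. First I would invoke the Markov property: since $i_t$ is a (time-homogeneous) Markov chain adapted to $\F_t$, for every $t\ge\tau$ one has $\mathbb{P}(i_t = i\mid\F_{t-\tau}) = P^{\tau}(i_{t-\tau}, i)$, where $P$ is the transition matrix of the chain and $P^{\tau}$ its $\tau$-step version. It therefore suffices to prove the bound $P^{\tau}(j,i)\ge\frac12\stationarydist_{\min}$ \emph{uniformly} over all $j,i\in\statespace$; the uniformity over the ``initial state'' $j$ is precisely what the worst-case mixing time delivers.

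Next I would use the submultiplicativity of total variation distance. Let $d(t)=\max_{j\in\statespace}\Vert P^{t}(j,\cdot)-\stationarydist\Vert_{\mathrm{TV}}$. The standard argument --- combining $\bar d(s+t)\le\bar d(s)\,\bar d(t)$ for $\bar d(t)=\max_{j,k\in\statespace}\Vert P^{t}(j,\cdot)-P^{t}(k,\cdot)\Vert_{\mathrm{TV}}$ with the inequalities $d(t)\le\bar d(t)\le 2d(t)$ and $d(\tmix)\le 1/4$ --- gives $d(\ell\,\tmix)\le 2^{-\ell}$ for every integer $\ell\ge1$ (see \citet[Sec. 4.5]{levin2017markov}). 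Taking $\ell=\lceil\log_2(2/\stationarydist_{\min})\rceil$, so that $\tau=\ell\,\tmix$, yields $d(\tau)\le 2^{-\ell}\le\stationarydist_{\min}/2$.

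Finally, since total variation distance dominates the discrepancy of the two measures on the singleton event $\{i\}$, for any $i,j\in\statespace$ we get $|P^{\tau}(j,i)-\stationarydist_i|\le d(\tau)\le\stationarydist_{\min}/2$, whence $P^{\tau}(j,i)\ge\stationarydist_i-\stationarydist_{\min}/2\ge\stationarydist_{\min}/2=\sigma$. Substituting $j=i_{t-\tau}$ and combining with the first step gives $\mathbb{P}(i_t=i\mid\F_{t-\tau})\ge\sigma$ for all $i\in\statespace$ and all $t\ge\tau$, which is exactly Assumption~\ref{assump:exploration}.

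The argument is essentially a bookkeeping exercise, so I do not anticipate a genuine obstacle; the one point requiring care is keeping the two mixing notions $d(\cdot)$ and $\bar d(\cdot)$ straight (in particular the factor of $2$ between them) so that the resulting constants match the $\tau$ and $\sigma$ stated in the proposition, and making sure the reduction through $i_{t-\tau}$ genuinely uses the worst-case --- rather than stationary-start --- form of the mixing time.
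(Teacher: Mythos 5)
Your proposal is correct and follows essentially the same route as the paper: the paper's proof also conditions on $\F_{t-\tau}$, invokes \citet[eq.\ (4.33)]{levin2017markov} to get $\mathrm{TV}\le 2^{-\lceil\log_2(2/\stationarydist_{\min})\rceil}\le\stationarydist_{\min}/2$, and concludes $d_i\ge\stationarydist_i-\mathrm{TV}\ge\stationarydist_{\min}/2$. The only difference is that you unpack the submultiplicativity argument behind that citation and make the Markov-property reduction to $P^{\tau}(i_{t-\tau},\cdot)$ explicit, which the paper leaves implicit.
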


\noindent With these assumptions, we are ready to state our main result,

\begin{theorem}\label{thm:main}
Suppose Assumptions \ref{assump:contraction}, \ref{assump:martingale} and \ref{assump:exploration} hold.  Further, assume there exists constant $\bar{x}\geq \Vert x^*\Vert_v$ s.t. $\forall t, \Vert x(t)\Vert_v \leq \bar{x}$ almost surely. Let the step size be $\alpha_t = \frac{h}{t+ t_0}$ with $t_0\geq \max(4h,\tau)$, and $h\geq \frac{2}{\sigma(1-\gamma)}$.  Then, with probability at least $1-\delta$,{\small
\begin{align*}
    \Vert x(T) - x^*\Vert_v \leq \frac{12\bar{\epsilon}}{1-\gamma} \sqrt{  \frac{(\tau+1) h}{\sigma}   } \sqrt{\frac{\log(\frac{2(\tau+1) T^2 n}{\delta}) }{T+t_0}} + \frac{4}{1-\gamma}\max(\frac{16 \bar\epsilon h\tau}{\sigma}, 2\bar{x}(\tau+t_0)) \frac{1}{T+t_0},  
\end{align*}}where
$\bar{\epsilon} =  2 \bar{x} +\operatorbound+ \frac{\bar{w}}{\vmin }$.
\end{theorem}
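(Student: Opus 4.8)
The plan is to analyze the error $\|x(t) - x^*\|_v$ through a recursive decomposition rather than an epoch-based argument. First I would rewrite the update \eqref{eq:x_update_1}--\eqref{eq:x_update_2} in vector form. Introduce the random diagonal matrix $D(t)$ with $D_{ii}(t) = \alpha_t \mathbf{1}\{i_t = i\}$, so that $x(t+1) = x(t) + D(t)(F(x(t)) - x(t) + w(t) e_{i_t})$. Subtracting the fixed point and using $F(x^*) = x^*$, write $z(t) = x(t) - x^*$ and split $z(t)$ into a ``deterministic drift'' part that tracks how the contraction shrinks the error and a ``noise'' part driven by the martingale $w$. Concretely, I would define an auxiliary recursion $\psi(t)$ that accumulates the stochastic noise with the same asynchronous gains, something like $\psi(t+1) = (I - D(t))\psi(t) + D(t) w(t) e_{i_t}$, and a residual $\phi(t) = z(t) - \psi(t)$ satisfying $\phi(t+1) = (I - D(t))\phi(t) + D(t)(F(x(t)) - x^*)$. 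Using Assumption~\ref{assump:contraction}(a), $\|F(x(t)) - x^*\|_v \le \gamma \|z(t)\|_v = \gamma\|\phi(t) + \psi(t)\|_v$, which gives a contraction-type inequality for $\|\phi(t)\|_v$ provided every coordinate is hit often enough — this is exactly where Assumption~\ref{assump:exploration} enters.

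The heart of the argument is then two quantitative estimates. (i) A high-probability bound on the noise term $\|\psi(T)\|_v$: since $\psi$ is a sum of martingale-difference contributions weighted by the diminishing step sizes $\alpha_t = h/(t+t_0)$, for each fixed coordinate $i$ I would apply a maximal Azuma/Hoeffding-type concentration inequality to the scalar process $\psi_i(t)$, controlling the accumulated variance by $\sum_{s} \alpha_s^2 \lesssim h^2/(T+t_0)$ after the coordinate's last few visits, and bounding the per-step increment by $\bar\epsilon$; then union bound over the $n$ coordinates and over the (at most $T$) relevant stopping times, which produces the $\sqrt{\log(2(\tau+1)T^2 n/\delta)/(T+t_0)}$ factor and the leading $\frac{1}{1-\gamma}\sqrt{(\tau+1)h/\sigma}$ coefficient. (ii) A deterministic-style bound on $\|\phi(t)\|_v$ showing it decays like $1/(T+t_0)$ up to a term proportional to $\sup_{s \le T}\|\psi(s)\|_v$. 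Here the asynchronicity forces me to look at blocks of length $\tau$: within any window $[t, t+\tau)$, Assumption~\ref{assump:exploration} guarantees each coordinate is updated with probability at least $\sigma$, so in expectation (and, via another concentration/counting argument, with high probability) the effective multiplicative decay of $\|\phi\|_v$ over a window is roughly $(1 - \sigma\alpha_t(1-\gamma))$-ish per coordinate; chaining these windows and using $h \ge \frac{2}{\sigma(1-\gamma)}$ together with $t_0 \ge \max(4h,\tau)$ to control the cross terms yields the claimed $1/(T+t_0)$ rate with the $\max(16\bar\epsilon h\tau/\sigma,\, 2\bar x(\tau+t_0))$ constant (the second term in the max handling the initial transient where the bound $\|x(t)\|_v \le \bar x$ is all we have).

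Finally I would combine: $\|z(T)\|_v \le \|\phi(T)\|_v + \|\psi(T)\|_v$, substitute the two bounds, absorb the $\sup_s\|\psi(s)\|_v$ appearing in the $\phi$-bound into the $\psi$-bound (the maximal inequality in step (i) already bounds the supremum, not just the endpoint), and collect constants to reach the stated expression. The main obstacle I anticipate is step (ii): handling the asynchronous coordinate-wise decay rigorously. Unlike the synchronous case, $\|\phi(t+1)\|_v$ is not simply $(1-\alpha_t(1-\gamma))\|\phi(t)\|_v$, because only one coordinate moves per step and the coordinate achieving the sup-norm may not be the one updated; one must argue that over a $\tau$-window the sup-norm cannot avoid being decreased, which requires carefully tracking how the un-updated coordinates' values relate back to earlier times and invoking the ``drift'' viewpoint borrowed from \citet{srikant2019finite}. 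Getting the right power of $(1-\gamma)$ and avoiding a spurious factor of $n$ in this block analysis is the delicate part.
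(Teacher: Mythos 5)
Your high-level plan (recursive error decomposition into a drift part and a noise part, Azuma for the noise, induction for the rate) matches the spirit of the paper, but there is a genuine gap at exactly the point you flag as ``the delicate part,'' and the paper's resolution of it is the one idea your sketch is missing. In your decomposition the random coordinate selection $\mathbf{e}_{i_t}\mathbf{e}_{i_t}^\top$ stays inside the drift recursion for $\phi$, so you must show that $\Vert\phi\Vert_v$ contracts over every window of length $\tau$. But Assumption~\ref{assump:exploration} only gives $\mathbb{P}(i_t=i\,|\,\F_{t-\tau})\geq\sigma$, so a fixed coordinate may receive \emph{no} update in a given $\tau$-window with probability as large as $1-\sigma$; on that event the sup-norm does not contract at all, and taking expectations does not help because the sup-norm is not linear. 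Making this rigorous forces you into longer windows plus union bounds over coordinates and windows, i.e.\ essentially the covering-time/epoch analysis of \citet{even2003learning}, which does not yield the constants in the theorem. The paper avoids this entirely: it replaces $\mathbf{e}_{i_t}\mathbf{e}_{i_t}^\top$ by its conditional expectation $D_t=\E[\mathbf{e}_{i_t}\mathbf{e}_{i_t}^\top\,|\,\F_{t-\tau}]$, whose diagonal entries are \emph{deterministically} at least $\sigma$, so the drift recursion contracts at every single step; the fluctuation $(\mathbf{e}_{i_t}\mathbf{e}_{i_t}^\top-D_t)(F(x(t-\tau))-x(t-\tau))$ becomes a ``shifted'' martingale difference (zero mean conditional on $\F_{t-\tau}$) handled by a $\tau$-shifted Azuma bound, and the residual from evaluating at $t-\tau$ instead of $t$ is $O(\tau\alpha_t)$ because the iterate moves slowly. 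That transfer of the asynchronicity noise from the drift term to the martingale term is the crux of the proof and is absent from your proposal.

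A second, smaller gap is in your concentration step (i): the weight multiplying the increment at time $k$ in $\psi(T)$ is $\alpha_k\prod_{\ell>k}(1-\alpha_\ell\mathbf{1}\{i_\ell=i\})$, which depends on the \emph{future} trajectory and is therefore not predictable; you cannot apply Azuma--Hoeffding to $\sum_k c_k X_k$ with such weights. The paper needs a dedicated comparison argument (Lemma~\ref{lem:martingale_upperbound_beta}) that dominates this random-weight sum by a supremum over $k_0$ of sums with the deterministic weights $\beta_{k,t}$, at the price of the extra union bound that produces the $T^2$ inside the logarithm of the final bound. Without both of these ingredients the proposal does not close.
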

The assumption in Theorem~\ref{thm:main} that $\Vert x(t)\Vert_v\leq \bar{x}$ is not necessary.  In particular, it can be shown (see Proposition~\ref{prop:x_bounded} below) that under Assumption~\ref{assump:contraction} and Assumption~\ref{assump:martingale}, $\Vert x(t)\Vert_v $ can be bounded by some constant almost surely. The proof of Proposition~\ref{prop:x_bounded} can be found in Appendix~\ref{appendix:x_bounded}. We treat the upper bound on $\Vert x(t)\Vert_v$ as a separate assumption because in the $Q$-learning case, the constant can be better than what is implied in Proposition~\ref{prop:x_bounded}.

\begin{proposition} \label{prop:x_bounded}
Suppose Assumptions~\ref{assump:contraction} and \ref{assump:martingale} hold.  Then for all $t$, $\Vert x(t)\Vert_v\leq \frac{1}{1-\gamma}((1+\gamma)\Vert x^*\Vert_v + \frac{\bar{w}}{\vmin}) $ almost surely.
\end{proposition}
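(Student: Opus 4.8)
The plan is a pathwise induction on $t$, exploiting that each step of \eqref{eq:x_update_1}--\eqref{eq:x_update_2} modifies a single coordinate through a convex combination (recall $\alpha_t\in(0,1)$; under the choice $\alpha_t=h/(t+t_0)$ with $t_0\ge 4h$ one even has $\alpha_t\le 1/4$). I would set $B:=\frac{1}{1-\gamma}\bigl((1+\gamma)\Vert x^*\Vert_v+\frac{\bar w}{\vmin}\bigr)$, the claimed bound, and verify the base case $\Vert x(0)\Vert_v=0\le B$ from $x(0)=0$.

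For the inductive step I would assume $\Vert x(t)\Vert_v\le B$ along a fixed sample path. Coordinates $i\ne i_t$ are frozen by \eqref{eq:x_update_2}, so $|x_i(t+1)|/v_i\le B$ trivially. For $i=i_t$, I would rewrite \eqref{eq:x_update_1} as $x_i(t+1)=(1-\alpha_t)x_i(t)+\alpha_t\bigl(F_i(x(t))+w(t)\bigr)$ and bound the three ingredients: $|x_i(t)|/v_i\le\Vert x(t)\Vert_v\le B$; $|F_i(x(t))|/v_i\le\Vert F(x(t))\Vert_v\le\gamma\Vert x(t)\Vert_v+(1+\gamma)\Vert x^*\Vert_v\le\gamma B+(1+\gamma)\Vert x^*\Vert_v$ (the middle step is exactly the consequence of Assumption~\ref{assump:contraction}(a) recorded in the footnote after that assumption); and $|w(t)|/v_i\le\bar w/\vmin$ from Assumption~\ref{assump:martingale}. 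Convexity then gives $|x_i(t+1)|/v_i\le(1-\alpha_t)B+\alpha_t\bigl(\gamma B+(1+\gamma)\Vert x^*\Vert_v+\bar w/\vmin\bigr)$, and the bracketed quantity equals $B$ by the definition of $B$, so $|x_i(t+1)|/v_i\le B$. Taking the supremum over $i$ closes the induction, and since $|w(t)|\le\bar w$ holds only almost surely, the conclusion holds almost surely.

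\textbf{Main obstacle.} Honestly, there is no real obstacle here: each inductive step reduces to a single inequality. The only subtlety worth flagging is that the convex-combination structure — hence the need for $\alpha_t\le 1$ — is what makes the telescoping work, and that only the almost-sure boundedness in Assumption~\ref{assump:martingale} is used, not the zero-mean property (the latter is reserved for the concentration arguments behind Theorem~\ref{thm:main}).
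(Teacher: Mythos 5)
Your proposal is correct and follows essentially the same route as the paper's proof in Appendix~\ref{appendix:x_bounded}: induction on $t$, writing the updated coordinate as a convex combination $(1-\alpha_t)x_{i_t}(t)+\alpha_t(F_{i_t}(x(t))+w(t))$, invoking $\Vert F(x)\Vert_v\le\gamma\Vert x\Vert_v+(1+\gamma)\Vert x^*\Vert_v$ and $|w(t)|\le\bar w$, and noting that the bound $B$ is exactly the fixed point of the resulting recursion. The only cosmetic difference is that you make explicit the requirement $\alpha_t\le 1$ (which the paper leaves implicit via $t_0\ge 4h$), so no changes are needed.
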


\noindent Theorem~\ref{thm:main} shows that, when setting $h = \Theta(\frac{1}{\sigma(1-\gamma)})$ and $t_0=\Theta(\max(h,\tau))$, $\Vert x(T) - x^*\Vert_v\leq \tilde{O}( \frac{\bar{\epsilon} \sqrt{\tau}}{(1-\gamma)^{1.5} \sigma} \frac{1}{\sqrt{T}}) + \tilde{O}(\frac{\bar{\epsilon}\tau}{\sigma^2 (1-\gamma)^2}\frac{1}{T}) $. This means that, to get an approximation error of $\varepsilon$, the number of time steps required is $T \gtrsim \frac{\bar{\epsilon}^2\tau}{\sigma^2(1-\gamma)^3}\frac{1}{\varepsilon^2}$. Compared to \citet[Thm. 5]{shah2018q}, our result improves the dependence on $\frac{1}{\varepsilon}$. Note that \cite{wainwright2019stochastic} does not provide an explicit approximation bound for the SA scheme, but state the bounds in the context of $Q$-learning instead. For this reason, we compare to \citet{wainwright2019stochastic} in the context of $Q$-learning 
in Section~\ref{sec:q}.

We also comment that in the step size $\frac{h}{t+t_0}$ in Theorem~\ref{thm:main}, it is important for the $h$ constant to scale with $\Theta(\frac{1}{(1-\gamma)\sigma})$ to avoid an exponential blow-up in $\frac{1}{1-\gamma}$. This fact is not apparent in the some of the earlier work like \citet{even2003learning}, but has been pointed out recently \citep{jin2018q,wainwright2019stochastic}. Specifically, \citet{wainwright2019stochastic} shows that $h$ needs to grow with $\frac{1}{1-\gamma}$ in the synchronous SA setting. Our result is consistent with \citet{wainwright2019stochastic} and further shows that in the asynchronous setting, $h$ also needs to scale with $\frac{1}{\sigma}$. If we interpret $\sigma$ as the fraction of times that each state is visited, then such scaling in $\frac{1}{\sigma}$ will result in step size of $\Theta(\frac{1}{\sigma t})$, which is similar in spirit to a common practice in asynchronous $Q$-learning, where the step size is coordinate dependent, $\alpha_t = \Theta(\frac{1}{N_{i_t}^t})$ instead of $\Theta(\frac{1}{t})$, where $N_{i_t}^t$ means the number of times $i_t$ has been visited up to time $t$. 


\section{Application to $Q$-learning}\label{sec:q}

We now apply the results for SA to the important special case of $Q$-learning.  The setting we study is defined as follows. We consider a $\gamma$-discounted infinite horizon Markov Decision Process (MDP) with finite state space $\mathcal{S}$ and finite action space $\mathcal{A}$. Our SA result applies to both the discounted ($\gamma<1$) and undiscounted ($\gamma=1$) case. For the connection between the undiscounted case $Q$-learning and the SA scheme with the weighted infinity norm, see e.g. \citet{tsitsiklis1994asynchronous}. For ease of presentation, we focus on the discounted case ($\gamma<1$), where we can let the norm be the standard infinity norm $\Vert\cdot\Vert_\infty$, i.e., $v$ is the all-one vector.

Let the transition probability of the MDP be given by $\mathbb{P}(s_{t+1}=s'|s_t=s,a_t=a)=\mathbb{P}(s'|s,a)$. At time $t$, conditioned on the current state $s_t$ and action $a_t$, the stage reward is a random variable $r_t$ independently drawn from some fixed distribution depending on $(s_t,a_t)$, with its expectation given by $r_{s_t,a_t}$, where $r\in \R^{\mathcal{S}\times\mathcal{A}}$ is a deterministic vector.  A policy $\pi:\mathcal{S}\rightarrow \Delta(\mathcal{A}), s\mapsto \pi(\cdot|s)$ maps the state space to the probability simplex on the action space $\Delta(\mathcal{A})$, and under the policy, $a_t$ is drawn from $\pi(\cdot|s_t)$. Given a policy $\pi$, the $Q$ table $Q^\pi: \R^{\mathcal{S}\times\mathcal{A}}$ under this policy is,
$$Q^\pi_{s,a} = \E_\pi \big[\sum_{t=0}^\infty \gamma^t r_t |(s_0,a_0)=(s,a) \big],$$
where $\E_\pi$ means the expectation is taken with $a_t$ drawn from $\pi(\cdot|s_t)$. The MDP problem seeks to find an optimal policy $\pi^*$ such that $Q^{\pi}(s,a)$ is maximized simultaneously for all $(s,a)$. Classical MDP theory \citep{bertsekas1996neuro} guarantees that such a $\pi^*$ must exist and, further, the resulting $Q$-function, which we denote as $Q^*$, is the unique fixed point of the Bellman Operator  $F:\R^{\mathcal{S}\times\mathcal{A}}\rightarrow \R^{\mathcal{S}\times\mathcal{A}}$ given by, 
\begin{align}
    F_{s,a}(Q) = r_{s,a} + \gamma \E_{s'\sim \mathbb{P}(\cdot|s,a)} \max_{a'\in\mathcal{A}} Q_{s',a'}. \label{eq:bellman}
\end{align}
Once $Q^*$ is known, an optimal policy can be easily determined \citep{bertsekas1996neuro}.

When the transition probabilities and the rewards are unknown, we cannot directly use \eqref{eq:bellman} to calculate $Q^*$. The $Q$-learning algorithm is an off-policy learning algorithm that approximates $Q^*$. In the asynchronous version of $Q$-learning, we sample a trajectory $\{(s_t,a_t,r_t)\}_{t=0}^\infty$ by taking a behavioral policy $\pi$. In this process, we maintain a $Q$ table $Q(t)$, which is initialized with $Q(0)$ being the all-zero table, and is updated upon observing every new state action pair $(s_{t+1},a_{t+1})$ using the following update rule,
\begin{align}
    Q_{s_t,a_t}(t+1) &= (1-\alpha_t)Q_{s_t,a_t}(t) + \alpha_t[r_t + \gamma  \max_{a\in\mathcal{A}} Q_{s_{t+1},a}(t)  ], \label{eq:q_1}\\
    Q_{s,a} (t+1) &= Q_{s,a}(t) \text{ for }(s,a)\neq (s_t,a_t). \label{eq:q_2}
\end{align}

Our results make the following standard assumptions regarding the MDP. Assumption~\ref{assump:q}(a) is an upper bound on the reward, and Assumption~\ref{assump:q}(b) is to ensure the sufficient exploration condition in Assumption~\ref{assump:exploration} holds (cf. Proposition~\ref{prop:ergodic}).\footnote{Assumption \ref{assump:q}(b) is a simple sufficient condition that leads to Assumption~\ref{assump:exploration}, but it is not necessary. For example, Assumption~\ref{assump:exploration} does not even require the exploratory policy to be stationary. } In the asynchronous $Q$-learning literature, it is common to require some type of sufficient exploration assumption. Assumption~\ref{assump:q}(b) is more general than the i.i.d.\ assumption in \citet{szepesvari1998asymptotic,lee2019unified}, and is similar in spirit to the covering time assumption in \citet{even2003learning} and another related assumption in \citet{beck2012error}. 

\begin{assumption}\label{assump:q} The following conditions hold.
\begin{itemize}
    \item [(a)] For all $t$, the stage reward $r_t$ is upper bounded, $|r_t| \leq \bar{r}$ almost surely.
    \item [(b)] Under the behavioral policy $\pi$, the induced Markov chain with state $(s_t,a_t)$ is ergodic, has a stationary distribution $\stationarydist$ and mixing time $\tmix$. Further, define $\stationarydist_{\min} = \inf_{s,a} \stationarydist_{s,a} > 0$.
\end{itemize}
\end{assumption}

\noindent We now show that under this assumption, the $Q$-learning updates \eqref{eq:q_1} and \eqref{eq:q_2} can be written in the form of \eqref{eq:x_update_1} and \eqref{eq:x_update_2} and meet Assumptions~\ref{assump:contraction}, \ref{assump:martingale}, \ref{assump:exploration}. We first identify $\statespace = \mathcal{S}\times\mathcal{N}$, $i_t = (s_t,a_t)$, and $Q(t)$ with $x(t)$. We let $\F_t$ be the $\sigma$-algebra generated by $(s_0,a_0,r_0,\ldots,s_{t-1},a_{t-1},r_{t-1},s_t,a_t)$. Then, clearly $(s_t,a_t)$ is $\F_t$ measurable. 
We also define 
\begin{align*}
  w(t)&:=  r_t + \gamma  \max_{a\in\mathcal{A}} Q_{s_{t+1},a}(t) - F_{s_t,a_t}(Q(t))\\
  &=r_t -  r_{s_t,a_t} + \gamma \max_{a\in\mathcal{A}} Q_{s_{t+1},a}(t)  - \gamma \E_{s'\sim \mathbb{P}(\cdot|s_t,a_t)} \max_{a\in\mathcal{A}}Q_{s',a}(t).
\end{align*}
Then, \eqref{eq:q_1} can be written as,
$$ Q_{s_t,a_t}(t+1) = Q_{s_t,a_t}(t) + \alpha_t[F_{s_t,a_t}(Q(t)) + w(t) - Q_{s_t,a_t}(t)  ], $$
which shows the $Q$-learning algorithm \eqref{eq:q_1} and \eqref{eq:q_2} can be written in the form of \eqref{eq:x_update_1} and \eqref{eq:x_update_2}. We then check Assumptions~\ref{assump:contraction}, \ref{assump:martingale}, \ref{assump:exploration}. For Assumption~\ref{assump:contraction}, it is known that the Bellman Operator $F$ is a $\gamma$-contraction in infinity norm \citep{tsitsiklis1994asynchronous}; further, it easy to check $\Vert F(Q)\Vert_\infty\leq\bar{r} + \gamma \Vert Q\Vert_\infty$, and hence Assumption~\ref{assump:contraction} is met with $\operatorbound = \bar{r}$. 
For Assumption~\ref{assump:martingale}, clearly $w(t)$ is $\F_{t+1}$-measurable, and satisfies $\mathbb{E} w(t)|\F_t = 0$. For the boundedness of $w(t)$, we have the following proposition, which completes the verification of Assumption~\ref{assump:martingale}. The proof of Proposition~\ref{prop:q_ub} can be found in Appendix~\ref{appendix:q_ub}.

\begin{proposition} \label{prop:q_ub} Under Assumption~\ref{assump:q}, the $Q$-learning update satisfies the following. (a) For all $t$, $\Vert Q(t)\Vert_\infty \leq \bar{x}:= \frac{\bar{r}}{1-\gamma} $ almost surely; also, $\Vert Q^*\Vert_\infty \leq \bar{x}$. (b) For all $t$, $|w(t)| \leq \bar{w}:= \frac{2\bar{r}}{1-\gamma}$ almost surely.
\end{proposition}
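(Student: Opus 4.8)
The plan is to establish both parts by elementary arguments: part~(a) for $Q^*$ from the definition of $Q^\pi$, part~(a) for $Q(t)$ by induction on $t$, and part~(b) from part~(a) via the triangle inequality. For the bound $\Vert Q^*\Vert_\infty\le\bar x$, I would use $Q^*_{s,a}=\E_{\pi^*}\big[\sum_{t\ge 0}\gamma^t r_t\mid (s_0,a_0)=(s,a)\big]$ together with $|r_t|\le\bar r$ almost surely (Assumption~\ref{assump:q}(a)), which gives $|Q^*_{s,a}|\le\sum_{t\ge 0}\gamma^t\bar r=\frac{\bar r}{1-\gamma}$; taking the supremum over $(s,a)$ yields the claim. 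Equivalently, one can argue from the fixed-point equation $Q^*=F(Q^*)$ and the bound $\Vert F(Q)\Vert_\infty\le\bar r+\gamma\Vert Q\Vert_\infty$ noted in the text.

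For $\Vert Q(t)\Vert_\infty\le\bar x$ I would induct on $t$. The base case holds since $Q(0)=0$. For the inductive step, assume $\Vert Q(t)\Vert_\infty\le\bar x$. Entries $(s,a)\ne(s_t,a_t)$ are unchanged by \eqref{eq:q_2} and so remain bounded by $\bar x$. For the updated entry, I would first note that for the step sizes considered (e.g.\ $\alpha_t=\frac{h}{t+t_0}$ with $t_0\ge 4h$) we have $\alpha_t\in(0,1)$, so \eqref{eq:q_1} expresses $Q_{s_t,a_t}(t+1)$ as a convex combination of $Q_{s_t,a_t}(t)$ and $r_t+\gamma\max_{a}Q_{s_{t+1},a}(t)$. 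The former has magnitude at most $\bar x$; the latter has magnitude at most $\bar r+\gamma\Vert Q(t)\Vert_\infty\le\bar r+\gamma\bar x=\bar r+\gamma\tfrac{\bar r}{1-\gamma}=\tfrac{\bar r}{1-\gamma}=\bar x$. Hence the convex combination has magnitude at most $\bar x$, closing the induction. The only subtlety here — and the only place the step-size specification enters — is verifying $\alpha_t\le 1$; beyond that the argument is just the observation that $\bar x$ is the fixed point of the scalar map $z\mapsto\bar r+\gamma z$.

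Finally, for part~(b), I would bound the two pieces of $w(t)=(r_t-r_{s_t,a_t})+\gamma\big(\max_{a}Q_{s_{t+1},a}(t)-\E_{s'\sim\mathbb{P}(\cdot|s_t,a_t)}\max_{a}Q_{s',a}(t)\big)$ separately. Since $|r_t|\le\bar r$ a.s.\ and $r_{s_t,a_t}=\E[r_t\mid s_t,a_t]$ also lies in $[-\bar r,\bar r]$, the first piece is at most $2\bar r$ in absolute value. For the second, both $\max_{a}Q_{s_{t+1},a}(t)$ and its conditional expectation $\E_{s'}\max_{a}Q_{s',a}(t)$ lie in $[-\Vert Q(t)\Vert_\infty,\Vert Q(t)\Vert_\infty]$, so their difference has magnitude at most $2\Vert Q(t)\Vert_\infty\le 2\bar x$, and multiplying by $\gamma$ gives $2\gamma\bar x$. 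Adding the two bounds, $|w(t)|\le 2\bar r+2\gamma\bar x=2\bar r+\tfrac{2\gamma\bar r}{1-\gamma}=\tfrac{2\bar r}{1-\gamma}=\bar w$, as claimed. I do not anticipate any genuine obstacle: the proof is entirely elementary, and the only point worth double-checking is the $\alpha_t\le 1$ bound invoked in the induction of part~(a).
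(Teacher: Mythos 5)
Your proposal is correct and follows essentially the same route as the paper: induction on $t$ using the convex-combination structure of the update for part (a), the series bound $\sum_t \gamma^t \bar r$ for $\Vert Q^*\Vert_\infty$, and the triangle inequality for part (b) (you group $w(t)$ as two centered differences while the paper bounds $r_t+\gamma\max_a Q_{s_{t+1},a}(t)$ and $F_{s_t,a_t}(Q(t))$ separately, but both give $\tfrac{2\bar r}{1-\gamma}$ by the same elementary argument). Your explicit check that $\alpha_t\le 1$ is a point the paper leaves implicit, and it is indeed the only place the step-size specification enters.
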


\noindent Finally, using Assumption \ref{assump:q}(b) and Proposition \ref{prop:ergodic}, we have that Assumption~\ref{assump:exploration} holds with $\sigma = \frac{1}{2}\stationarydist_{\min}$ and $\tau = \lceil \log_2\frac{2}{\stationarydist_{\min}}\rceil \tmix$. 

Combining the three assumptions together with the upper bound on $\Vert Q(t)\Vert_\infty$ in Proposition~\ref{prop:q_ub}(a), we can directly apply Theorem~\ref{thm:main} and obtain the following finite-time error bounds for $Q$-learning. 

\begin{theorem}\label{thm:q}
Suppose Assumption~\ref{assump:q} holds and the step size is taken to be $\alpha_t = \frac{h}{t+ t_0}$ with $t_0\geq \max(4h,\lceil \log_2\frac{2}{\stationarydist_{\min}}\rceil \tmix)$ 
and $h\geq \frac{4}{ \stationarydist_{\min} (1-\gamma)}$. 
Then, with probability at least $1-\delta$,
{\small\begin{align*}
    \Vert Q(T) - Q^*\Vert_\infty  &\leq \frac{60\bar{r}}{(1-\gamma)^2} \sqrt{  \frac{2(\lceil \log_2\frac{2}{\stationarydist_{\min}}\rceil \tmix+1) h}{\stationarydist_{\min}}   } \sqrt{\frac{\log(\frac{2(\lceil \log_2\frac{2}{\stationarydist_{\min}}\rceil \tmix +1) T^2 |\mathcal{S}| |\mathcal{A}|}{\delta}) }{T+t_0}}\\
    &\quad + \frac{4\bar{r}}{(1-\gamma)^2}\max\Big(\frac{160  h \lceil \log_2\frac{2}{\stationarydist_{\min}}\rceil \tmix }{\stationarydist_{\min} }, 2(\lceil \log_2\frac{2}{\stationarydist_{\min}}\rceil \tmix+t_0)\Big) \frac{1}{T+t_0}.
\end{align*}}
\end{theorem}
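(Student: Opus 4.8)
The plan is to derive Theorem~\ref{thm:q} as an essentially mechanical corollary of Theorem~\ref{thm:main}, so the real work is bookkeeping: identifying the SA instance, verifying the three structural assumptions, and substituting the resulting constants into the Theorem~\ref{thm:main} bound while tracking how the $\frac{1}{1-\gamma}$ factors compound. First I would set up the correspondence already sketched in the excerpt: take $\statespace = \mathcal{S}\times\mathcal{A}$, $i_t = (s_t,a_t)$, identify $Q(t)$ with $x(t)$, let $\F_t$ be the natural filtration, and define the noise $w(t) = r_t + \gamma\max_a Q_{s_{t+1},a}(t) - F_{s_t,a_t}(Q(t))$ so that \eqref{eq:q_1}--\eqref{eq:q_2} becomes exactly \eqref{eq:x_update_1}--\eqref{eq:x_update_2}.

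Next I would collect the constants. Assumption~\ref{assump:contraction} holds in the standard infinity norm ($v$ the all-one vector, so $\vmin = 1$) with contraction coefficient $\gamma$ and, using $\Vert F(Q)\Vert_\infty \leq \bar{r} + \gamma\Vert Q\Vert_\infty$, with $\operatorbound = \bar{r}$. Assumption~\ref{assump:martingale} holds because $w(t)$ is $\F_{t+1}$-measurable with $\E[w(t)\mid\F_t]=0$, and by Proposition~\ref{prop:q_ub}(b) with $\bar{w} = \frac{2\bar{r}}{1-\gamma}$. Assumption~\ref{assump:exploration} holds by Assumption~\ref{assump:q}(b) and Proposition~\ref{prop:ergodic} with $\sigma = \frac{1}{2}\stationarydist_{\min}$ and $\tau = \lceil\log_2\frac{2}{\stationarydist_{\min}}\rceil\tmix$. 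Finally, Proposition~\ref{prop:q_ub}(a) supplies the almost-sure bound $\Vert Q(t)\Vert_\infty \leq \bar{x} := \frac{\bar{r}}{1-\gamma}$ with also $\Vert Q^*\Vert_\infty \leq \bar{x}$, which is precisely the extra hypothesis Theorem~\ref{thm:main} needs. With these in hand, the step-size condition $h \geq \frac{2}{\sigma(1-\gamma)} = \frac{4}{\stationarydist_{\min}(1-\gamma)}$ and $t_0 \geq \max(4h,\tau)$ matches the statement of Theorem~\ref{thm:q} verbatim.

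The remaining step is the substitution. The key quantity is $\bar\epsilon = 2\bar{x} + \operatorbound + \frac{\bar{w}}{\vmin} = \frac{2\bar{r}}{1-\gamma} + \bar{r} + \frac{2\bar{r}}{1-\gamma} \leq \frac{5\bar{r}}{1-\gamma}$ (using $\bar{r} \leq \frac{\bar{r}}{1-\gamma}$). Plugging $\bar\epsilon \leq \frac{5\bar{r}}{1-\gamma}$, $\sigma = \frac{1}{2}\stationarydist_{\min}$, $\vmin = 1$, $n = |\mathcal{S}||\mathcal{A}|$, and $\tau = \lceil\log_2\frac{2}{\stationarydist_{\min}}\rceil\tmix$ into the Theorem~\ref{thm:main} bound, the leading coefficient $\frac{12\bar\epsilon}{1-\gamma}$ becomes $\leq \frac{60\bar{r}}{(1-\gamma)^2}$, the factor $\sqrt{(\tau+1)h/\sigma}$ becomes $\sqrt{2(\lceil\log_2\frac{2}{\stationarydist_{\min}}\rceil\tmix+1)h/\stationarydist_{\min}}$, and the second (lower-order) term's prefactor $\frac{4}{1-\gamma}$ and inner $\max(\frac{16\bar\epsilon h\tau}{\sigma}, 2\bar{x}(\tau+t_0))$ become $\frac{4\bar{r}}{(1-\gamma)^2}$ times $\max(\frac{160 h\lceil\log_2\frac{2}{\stationarydist_{\min}}\rceil\tmix}{\stationarydist_{\min}}, 2(\lceil\log_2\frac{2}{\stationarydist_{\min}}\rceil\tmix + t_0))$, using $16\bar\epsilon/\sigma \leq 16\cdot\frac{5\bar{r}}{1-\gamma}\cdot\frac{2}{\stationarydist_{\min}} = \frac{160\bar{r}}{(1-\gamma)\stationarydist_{\min}}$ and $2\bar{x} = \frac{2\bar{r}}{1-\gamma}$. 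This reproduces the displayed bound.

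There is no genuine obstacle here — the theorem is stated as a direct consequence — so the only thing requiring care is the constant arithmetic: making sure each $\frac{1}{1-\gamma}$ from $\bar{x}$, $\bar{w}$, and the explicit $\frac{1}{1-\gamma}$ prefactors in Theorem~\ref{thm:main} is accounted for (yielding the $\frac{1}{(1-\gamma)^2}$ in front), and that the numerical constants ($12 \to 60$, $4 \to 4$ with the $160$ absorbed inside the max) are the result of bounding $\bar\epsilon$ by $\frac{5\bar{r}}{1-\gamma}$ and $\frac{1}{\sigma}$ by $\frac{2}{\stationarydist_{\min}}$. I would present the verification of the three assumptions as already done in the surrounding text and then state the one-line substitution, deferring the propositions invoked (Propositions~\ref{prop:ergodic}, \ref{prop:q_ub}) to their appendices.
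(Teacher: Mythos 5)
Your proposal is correct and follows exactly the paper's own route: the assumptions are verified with the same identifications ($\sigma = \tfrac{1}{2}\stationarydist_{\min}$, $\tau = \lceil\log_2\tfrac{2}{\stationarydist_{\min}}\rceil\tmix$, $\operatorbound=\bar r$, $\bar x = \bar w/2 = \tfrac{\bar r}{1-\gamma}$, $\vmin=1$), and the theorem is obtained by direct substitution into Theorem~\ref{thm:main} with $\bar\epsilon\leq\tfrac{5\bar r}{1-\gamma}$, which reproduces all the displayed constants. No gaps.
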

From the above theorem, if we take $h =\Theta(\frac{1}{\stationarydist_{\min}(1-\gamma)})$, $t_0 = \tilde{\Theta} (\max(\frac{1}{\stationarydist_{\min}(1-\gamma)}, \tmix))$, the convergence rate becomes $\tilde{O}( \frac{\bar{r} \sqrt{\tmix}}{(1-\gamma)^{5/2} \stationarydist_{\min}}   \frac{1}{\sqrt{T}} +  \frac{\bar{r} \tmix}{(1-\gamma)^3\stationarydist_{\min}^2} \frac{1}{T} )$. Therefore, to reach a $\varepsilon$ accuracy in infinity norm, it takes $T \gtrsim  \frac{\bar{r}^2 \tmix}{(1-\gamma)^5 \stationarydist_{\min}^2} \frac{1}{\varepsilon^2}  $ iterations. This bound matches the best known dependence on $\frac{1}{1-\gamma}$ and $\frac{1}{\varepsilon}$ in \emph{synchronous} $Q$-learning \citep{wainwright2019stochastic}. The extra factor $\frac{\tmix}{\stationarydist_{\min}^2}$ is a result of the asynchronous updates. If we interpret $\frac{1}{\stationarydist_{\min}}$ to scale with $|\mathcal{S}|\times |\mathcal{A}|$ (the state-action space size), the extra factor becomes $\tmix (|\mathcal{S}|\times |\mathcal{A}|)^2$. We believe the scaling in $\tmix$ is inevitable. 
When compared with the results on \emph{asynchronous} $Q$-learning, to the best of our knowledge, the best finite-time bound is that of \citet[Thm. 4]{even2003learning}, where the scaling is $ \frac{(|\mathcal{S}| |\mathcal{A}|)^5}{(1-\gamma)^5 \epsilon^{2.5}}$ when $\omega = 4/5$ (optimizing dependence on $\frac{1}{1-\gamma}$), or $\frac{ (|\mathcal{S}| |\mathcal{A}|)^{3.3}}{(1-\gamma)^{5.2} \varepsilon^{2.6}} $ when $\omega = 0.77$ (optimizing dependence on $|\mathcal{S}| |\mathcal{A}| $).\footnote{Notably, \citet{even2003learning} uses a different assumption on sufficient exploration.} Here $\omega$ is a step size parameter in \citet{even2003learning}. 
While our result improves the dependence on $\frac{1}{\varepsilon}, \frac{1}{1-\gamma}, (|\mathcal{S}| |\mathcal{A}|)$ over that of \citet{even2003learning}, we believe our square dependence on the state-action space size is not optimal. We leave it as future work to investigate whether this is an intrinsic property of the algorithm or it is an artifact of the proof.

\section{Convergence Proof}\label{sec:convergence}

In this section, we prove our main result, Theorem~\ref{thm:main}. The proof is divided into three steps. In the first step, we manipulate the update equation (\eqref{eq:x_update_1} and \eqref{eq:x_update_2}) and decompose the error in a recursive form, which provides a transparent view of how the stochastic noise affects the error. In the second step, we bound the contribution of the noise sequence to the error decomposition. In the third step, we use the error decomposition and the noise sequence bounds to prove the result.

\textbf{Step 1: Decomposition of Error. } Let $\mathbf{e}_i$ to be the unit vector (the $i$'th entry is $1$ and others are zero). We let $D_t = \E \mathbf{e}_{i_t}\mathbf{e}_{i_t}^\top | \mathcal{F}_{t-\tau}$. Then, it is clear $D_t $ is a $\F_{t-\tau}$-measurable $n$-by-$n$ diagonal random matrix, with its $i$'th entry being $d_{t,i} = \mathbb{P}(i_t=i|\F_{t-\tau})$. By Assumption~\ref{assump:exploration}, we have
\begin{align}
    d_{t,i} \geq \sigma \text{ almost surely.} \label{eq:d_lowerbd}
\end{align}
With these definitions, we can rewrite the update equation \eqref{eq:x_update_1} and \eqref{eq:x_update_2} as follows,{\small
\begin{align}
    x(t+1) &= x(t) + \alpha_t[\mathbf{e}_{i_t}^\top F(x(t)) - \mathbf{e}_{i_t}^\top x(t) +  \martingale (t)]\mathbf{e}_{i_t} \nonumber \\
    &= x(t) + \alpha_t [\mathbf{e}_{i_t}\mathbf{e}_{i_t}^\top ( F(x(t)) - x(t)) + \martingale(t) \mathbf{e}_{i_t}   ]\nonumber \\
    &= x(t) + \alpha_t D_t ( F(x(t)) - x(t))   + \alpha_t [(\mathbf{e}_{i_t}\mathbf{e}_{i_t}^\top - D_t) ( F(x(t)) - x(t)) + \martingale(t) \mathbf{e}_{i_t}   ]\nonumber\\
    &= x(t) + \alpha_t [D_t F(x(t)) -D_t x(t) ]\nonumber\\
    &\quad + \alpha_t \underbrace{\big[(\mathbf{e}_{i_t}\mathbf{e}_{i_t}^\top - D_t) (F(x(t-\tau)) - x(t-\tau)) + \martingale(t) \mathbf{e}_{i_t} \big]}_{:=\epsilon(t)}\nonumber\\
    &\quad + \alpha_t \underbrace{   (\mathbf{e}_{i_t}\mathbf{e}_{i_t}^\top -D_t) [F(x(t))-F(x(t-\tau)) - (x(t) - x(t-\tau))  ]  }_{:=\phi(t)}\nonumber\\
    &= (I - \alpha_tD_t) x(t) + \alpha_t D_t F(x(t)) + \alpha_t (\epsilon(t) + \phi(t)). \label{eq:x_update_epsilon_phi}
\end{align}}Clearly, $x(t)$ is $\F_t$ measurable and $\epsilon(t)$ is $\F_{t+1}$ measurable (as $\epsilon(t)$ depends on $w(t)$, which is $\F_{t+1}$ measurable). Further,{\small
\begin{align} 
    \E \epsilon(t)|\F_{t-\tau}  
    &= \E [(\mathbf{e}_{i_t}\mathbf{e}_{i_t}^\top - D_t)| \F_{t-\tau}] [F(x(t-\tau)) - x(t-\tau)] + \E [\E [\martingale(t)| \F_t] \mathbf{e}_{i_t}|\F_{t-\tau} ] =0. \label{eq:martingale} 
\end{align}}In other words, $\epsilon(t)$ is like a ``shifted'' martingale difference sequence, where here ``shifted'' means the conditioning in \eqref{eq:martingale} is with respect to $\F_{t-\tau}$ instead of $\F_t$ as would be the case in a standard martingale difference sequence. Property \eqref{eq:martingale} will be useful later in the proof. For now, we focus on \eqref{eq:x_update_epsilon_phi} and expand it recursively, getting,{\small
\begin{align}
    x(t+1) &= \prod_{k=\tau}^t (I - \alpha_k D_k) x(\tau) + \sum_{k=\tau}^t \alpha_k D_k \prod_{\ell=k+1}^t(I - \alpha_\ell D_\ell) F(x(k)) + \sum_{k=\tau}^t \alpha_k  \prod_{\ell=k+1}^t(I - \alpha_\ell D_\ell) (\epsilon(k) + \phi(k))\nonumber  \\
    &= \tilde{B}_{\tau-1,t} x(\tau) + \sum_{k=\tau}^t B_{k,t} F(x(k))  + \sum_{k=\tau}^t \alpha_k \tilde{B}_{k,t} (\epsilon(k) + \phi(k)), \label{eq:x_sum_update}
\end{align}}where we have defined, $B_{k,t}=   \alpha_k D_k \prod_{\ell=k+1}^t(I - \alpha_\ell D_\ell)$, $\tilde{B}_{k,t} =    \prod_{\ell=k+1}^t(I - \alpha_\ell D_\ell)$. Clearly, $B_{k,t}$ and $\tilde{B}_{k,t}$ are $n$-by-$n$ diagonal random matrices, with the $i$'th diagonal entry given by $b_{k,t,i}$ and $\tilde{b}_{k,t,i}$, where $b_{k,t,i} = \alpha_k d_{k,i} \prod_{\ell=k+1}^t (1 - \alpha_\ell d_{\ell,i})$ and $\tilde{b}_{k,t,i} = \prod_{\ell=k+1}^t (1 - \alpha_\ell d_{\ell,i})$.
So, for any $i$, 
\begin{align}
    \tilde{b}_{\tau-1,t,i} + \sum_{k=\tau}^t b_{k,t,i} = 1. \label{eq:B_sum_I}
\end{align}
Also, by \eqref{eq:d_lowerbd}, we have for any $i$, almost surely
\begin{align}
    b_{k,t,i}\leq \beta_{k,t}:= \alpha_k \prod_{\ell=k+1}^{t} (1 - \alpha_\ell \sigma),  \quad 
\tilde{b}_{k,t,i}\leq \tilde{\beta}_{k,t} =  \prod_{\ell=k+1}^{t} (1 - \alpha_\ell \sigma). \label{eq:b_upperbd}
\end{align}
With these preparations, we are ready to state the following Lemma, which decomposes the error $\Vert x(t) - x^*\Vert_v$ in a recursive form. The proof of Lemma~\ref{lem:error_recursive} can be found in Appendix~\ref{subsec:error_recursive}.

\begin{lemma}\label{lem:error_recursive}
Let $a_t = \Vert x(t) - x^*\Vert_v$, we have almost surely,
\begin{align*}
 a_{t+1} \leq \tilde{\beta}_{\tau-1,t}a_\tau  + \gamma \sup_{i \in \statespace} \sum_{k=\tau}^{t}  b_{k,t,i}  a_k +  \Big\Vert \sum_{k=\tau}^{t}  \alpha_k \tilde{B}_{k,t} \epsilon(k)\Big\Vert_v  + \Big\Vert \sum_{k=\tau}^{t}  \alpha_k \tilde{B}_{k,t}  \phi(k) \Big\Vert_v.
\end{align*}
\end{lemma}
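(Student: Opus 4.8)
The plan is to start from the recursive expansion \eqref{eq:x_sum_update} and subtract $x^*$ from both sides, using the fact that the diagonal entries $\tilde b_{\tau-1,t,i}$ and $b_{k,t,i}$ sum to one (\eqref{eq:B_sum_I}) to write $x^*$ itself as a convex combination: $x^* = \tilde B_{\tau-1,t} x^* + \sum_{k=\tau}^t B_{k,t} x^*$. First I would use this to obtain
\begin{align*}
x(t+1) - x^* = \tilde B_{\tau-1,t}(x(\tau) - x^*) + \sum_{k=\tau}^t B_{k,t}\big(F(x(k)) - x^*\big) + \sum_{k=\tau}^t \alpha_k \tilde B_{k,t}(\epsilon(k) + \phi(k)).
\end{align*}
Then, since $x^* = F(x^*)$ is the fixed point, Assumption~\ref{assump:contraction}(a) gives the entrywise-in-$v$-norm bound $\Vert F(x(k)) - x^*\Vert_v = \Vert F(x(k)) - F(x^*)\Vert_v \le \gamma\Vert x(k) - x^*\Vert_v = \gamma a_k$.

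Next I would take $\Vert\cdot\Vert_v$ of both sides and apply the triangle inequality to split into four terms. The first three of the four desired terms come out directly: for the diagonal matrices $\tilde B_{\tau-1,t}$ and $B_{k,t}$, Proposition~\ref{prop:vnorm} says the induced $v$-norm is the sup of the absolute diagonal entries, so $\Vert \tilde B_{\tau-1,t}(x(\tau)-x^*)\Vert_v \le (\sup_i \tilde b_{\tau-1,t,i}) a_\tau \le \tilde\beta_{\tau-1,t} a_\tau$ by \eqref{eq:b_upperbd}; and for the $F$-term, $\Vert \sum_k B_{k,t}(F(x(k))-x^*)\Vert_v \le \sup_i \sum_k b_{k,t,i}\,\Vert F(x(k))-x^*\Vert_v \le \gamma\sup_i\sum_k b_{k,t,i} a_k$. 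The $\epsilon$ and $\phi$ sums are simply carried over unchanged, yielding exactly the stated bound on $a_{t+1}$.

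The one slightly delicate point — and the place I'd be most careful — is justifying the step $\Vert \sum_k B_{k,t} y_k\Vert_v \le \sup_i \sum_k b_{k,t,i}\Vert y_k\Vert_v$ where $y_k = F(x(k)) - x^*$. Since each $B_{k,t}$ is diagonal, the $i$-th entry of $\sum_k B_{k,t} y_k$ is $\sum_k b_{k,t,i}\, (y_k)_i$; dividing by $v_i$, taking absolute values, and using $b_{k,t,i}\ge 0$ gives $\frac{|\sum_k b_{k,t,i}(y_k)_i|}{v_i} \le \sum_k b_{k,t,i}\frac{|(y_k)_i|}{v_i} \le \sum_k b_{k,t,i}\Vert y_k\Vert_v$, and then taking $\sup_i$ finishes it; the same argument handles the $\tilde B_{\tau-1,t}(x(\tau)-x^*)$ term. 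Everything else is bookkeeping: the nonnegativity of the coefficients $b_{k,t,i}, \tilde b_{k,t,i}$ (from $t_0 \ge 4h$, which makes $\alpha_\ell d_{\ell,i} \le 1$) is what makes the convex-combination rewriting of $x^*$ legitimate, and it should be noted/checked. No obstacle of real substance arises; the lemma is essentially a clean reorganization of \eqref{eq:x_sum_update} combined with the contraction property and the diagonal-matrix norm formula.
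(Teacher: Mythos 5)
Your proof is correct and follows essentially the same route as the paper's: both start from \eqref{eq:x_sum_update}, use \eqref{eq:B_sum_I} to absorb $x^*$ as a convex combination of the diagonal coefficients, and then apply the triangle inequality entrywise together with the contraction property $\Vert F(x(k))-F(x^*)\Vert_v\le\gamma a_k$. Your explicit remark about the nonnegativity of $b_{k,t,i}$ (needed to pull absolute values through the convex combination) is a point the paper leaves implicit, but there is no substantive difference.
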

From Lemma \ref{lem:error_recursive}, it is clear that to control the error $a_t$, we need to bound $  \Vert \sum_{k=\tau}^{t}  \alpha_k \tilde{B}_{k,t} \epsilon(k) \Vert_v $ and $ \Vert \sum_{k=\tau}^{t}  \alpha_k \tilde{B}_{k,t}  \phi(k) \Vert_v$, which will be the focus of the next step.

\bigskip
\textbf{Step 2: Bounding $  \Vert \sum_{k=\tau}^{t}  \alpha_k \tilde{B}_{k,t} \epsilon(k) \Vert_v $ and $ \Vert \sum_{k=\tau}^{t}  \alpha_k \tilde{B}_{k,t}  \phi(k) \Vert_v$.} We start with a bound on each individual $\epsilon(k)$ and $\phi(k)$ in the following lemma, proven in Appendix~\ref{appendix:sec:bounded}.
\begin{lemma} \label{lem:bounded}
The following bounds hold almost surely. 
(a) $\Vert \epsilon(t)\Vert_v \leq \bar{\epsilon}:= 2 \bar{x} +\operatorbound+ \frac{\bar{w}}{\vmin }$.
(b) $\Vert \phi(t)\Vert_v \leq \sum_{k=t-\tau+1}^t 2\bar{\epsilon} \alpha_{k-1}. $ 
\end{lemma}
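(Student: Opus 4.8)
\noindent\textbf{Proof proposal for Lemma~\ref{lem:bounded}.}
The plan is to bound $\epsilon(t)$ and $\phi(t)$ by a coordinatewise argument, the key observation being that $\mathbf{e}_{i_t}\mathbf{e}_{i_t}^\top - D_t$ is a diagonal matrix: its diagonal entry in coordinate $i_t$ equals $1 - d_{t,i_t}\in[0,1]$ and in every other coordinate equals $-d_{t,i}\in[-1,0]$ (recall $d_{t,i}\in[0,1]$ and $\sum_i d_{t,i}=1$). Hence by the diagonal case of Proposition~\ref{prop:vnorm}, $\Vert \mathbf{e}_{i_t}\mathbf{e}_{i_t}^\top - D_t\Vert_v \leq 1$. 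Throughout I take $t\geq\tau$ so that $x(t-\tau)$ is well defined, which is all that is needed for the error recursion in Lemma~\ref{lem:error_recursive}.

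For part (a), the triangle inequality and $\Vert\mathbf{e}_{i_t}\Vert_v = 1/v_{i_t}$ give $\Vert\epsilon(t)\Vert_v \leq \Vert F(x(t-\tau)) - x(t-\tau)\Vert_v + |\martingale(t)|/v_{i_t}$. For the first term, Assumption~\ref{assump:contraction}(b) yields $\Vert F(x(t-\tau))\Vert_v \leq \gamma\Vert x(t-\tau)\Vert_v + \operatorbound$, which together with $\Vert x(t-\tau)\Vert_v \leq \bar{x}$ gives $\Vert F(x(t-\tau)) - x(t-\tau)\Vert_v \leq (1+\gamma)\bar{x} + \operatorbound \leq 2\bar{x} + \operatorbound$. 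For the second term, Assumption~\ref{assump:martingale} and $v_{i_t}\geq\vmin$ give $|\martingale(t)|/v_{i_t} \leq \bar{w}/\vmin$. Adding these is exactly $\bar{\epsilon}$.

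For part (b), the same norm bound on $\mathbf{e}_{i_t}\mathbf{e}_{i_t}^\top - D_t$ gives $\Vert\phi(t)\Vert_v \leq \Vert F(x(t)) - F(x(t-\tau))\Vert_v + \Vert x(t) - x(t-\tau)\Vert_v$, and Assumption~\ref{assump:contraction}(a) bounds the first term by $\gamma\Vert x(t) - x(t-\tau)\Vert_v$, so $\Vert\phi(t)\Vert_v \leq (1+\gamma)\Vert x(t) - x(t-\tau)\Vert_v \leq 2\Vert x(t) - x(t-\tau)\Vert_v$. It then remains to control the drift over the window. Telescoping, $x(t) - x(t-\tau) = \sum_{k=t-\tau}^{t-1}(x(k+1) - x(k))$, and from \eqref{eq:x_update_1}--\eqref{eq:x_update_2} we have $x(k+1) - x(k) = \alpha_k\big(F_{i_k}(x(k)) - x_{i_k}(k) + \martingale(k)\big)\mathbf{e}_{i_k}$, whence $\Vert x(k+1) - x(k)\Vert_v = \alpha_k|F_{i_k}(x(k)) - x_{i_k}(k) + \martingale(k)|/v_{i_k} \leq \alpha_k\big(\Vert F(x(k))\Vert_v + \Vert x(k)\Vert_v + \bar{w}/\vmin\big) \leq \alpha_k\bar{\epsilon}$ by the identical estimate used in part (a). Summing over $k = t-\tau,\dots,t-1$ and re-indexing $k\mapsto k-1$ gives $\Vert x(t) - x(t-\tau)\Vert_v \leq \sum_{k=t-\tau+1}^{t}\alpha_{k-1}\bar{\epsilon}$, and therefore $\Vert\phi(t)\Vert_v \leq \sum_{k=t-\tau+1}^{t}2\bar{\epsilon}\alpha_{k-1}$, as claimed.

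This lemma is essentially routine bookkeeping; the only mildly delicate points are recognizing that $\Vert\mathbf{e}_{i_t}\mathbf{e}_{i_t}^\top - D_t\Vert_v \leq 1$ (not merely $2$) via the diagonal-matrix case of Proposition~\ref{prop:vnorm}, and arranging the per-step increment to be controlled by the same constant $\bar{\epsilon}$ that governs $\epsilon(t)$, which makes the index shift in part (b) line up with the statement. I do not anticipate a substantive obstacle.
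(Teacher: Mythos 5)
Your proof is correct and follows essentially the same route as the paper: bound $\Vert \mathbf{e}_{i_t}\mathbf{e}_{i_t}^\top - D_t\Vert_v \leq 1$ via the diagonal case of Proposition~\ref{prop:vnorm}, apply the triangle inequality together with Assumption~\ref{assump:contraction} for part (a), and for part (b) reduce to $2\Vert x(t)-x(t-\tau)\Vert_v$ and telescope the per-step increments, each bounded by $\alpha_{k}\bar{\epsilon}$. No gaps.
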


\noindent To bound $  \Vert \sum_{k=\tau}^{t}  \alpha_k \tilde{B}_{k,t} \epsilon(k) \Vert_v $ and $ \Vert \sum_{k=\tau}^{t}  \alpha_k \tilde{B}_{k,t}  \phi(k) \Vert_v$, we also need to understand the behavior of $\alpha_k$ and $\tilde{B}_{k,t}$. Recall that, by \eqref{eq:b_upperbd}, each entry of $B_{k,t}$ and $\tilde{B}_{k,t}$ are upper bounded by $\beta_{k,t}$ and $\tilde{\beta}_{k,t}$ respectively. We now provide the following results on the sequence $\beta_{k,t}$, $\tilde{\beta}_{k,t}$ which we will frequently use later to control $\alpha_k \tilde{B}_{k,t}$. The proof of Lemma~\ref{lem:stepsize} is provided in Appendix~\ref{appendix:sec:stepsize}.

\begin{lemma}\label{lem:stepsize}
If $\alpha_t = \frac{h}{t+t_0}$, where $ h>\frac{2}{\sigma}$ and $t_0\geq \max(4 h,\tau)$, then $\beta_{k,t},\tilde{\beta}_{k,t}$ satisfies the following. 
\begin{itemize}
    \item[(a)] $ \beta_{k,t}\leq  \frac{h}{k+t_0} \Big( \frac{k+1+t_0}{t+1+t_0}\Big)^{\sigma h}$, $ \tilde\beta_{k,t}\leq   \Big( \frac{k+1+t_0}{t+1+t_0}\Big)^{\sigma h}$.
    \item[(b)] $\sum_{k=1}^{t}\beta_{k,t}^2 \leq \frac{2h}{\sigma } \frac{1}{(t+1+t_0)} $.
    \item[(c)] $\sum_{k=\tau}^{t}\beta_{k,t}\sum_{\ell = k-\tau+1}^{k} \alpha_{\ell-1} \leq \frac{8h\tau}{\sigma} \frac{1}{t+1+t_0} $.
\end{itemize}
\end{lemma}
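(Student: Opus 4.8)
The plan is to prove the three bounds in order, with (a) furnishing the basic estimate that is then fed into (b) and (c). For (a), the starting point is the elementary inequality $1-x\le e^{-x}$, applied to each factor of $\tilde\beta_{k,t}=\prod_{\ell=k+1}^t(1-\alpha_\ell\sigma)$ --- note $\alpha_\ell\sigma=\frac{h\sigma}{\ell+t_0}\le\frac{h\sigma}{t_0}\le\frac{\sigma}{4}<1$ since $t_0\ge4h$, so every factor is positive. This gives $\tilde\beta_{k,t}\le\exp\!\big(-\sigma h\sum_{\ell=k+1}^t\frac{1}{\ell+t_0}\big)$, and comparing the sum with an integral from below, $\sum_{\ell=k+1}^t\frac{1}{\ell+t_0}\ge\int_{k+1}^{t+1}\frac{ds}{s+t_0}=\log\frac{t+1+t_0}{k+1+t_0}$, turns this into $\tilde\beta_{k,t}\le\big(\frac{k+1+t_0}{t+1+t_0}\big)^{\sigma h}$; multiplying by $\alpha_k=\frac{h}{k+t_0}$ gives the bound on $\beta_{k,t}$. (The boundary case $k=t$, where the products are empty and equal $1$, is consistent with both bounds.)

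For (b), I would substitute the bound from (a) to get $\beta_{k,t}^2\le\frac{h^2}{(k+t_0)^2}\big(\frac{k+1+t_0}{N}\big)^{2\sigma h}$, writing $N:=t+1+t_0$. Factoring out $N^{-2\sigma h}$ and splitting $(k+1+t_0)^{2\sigma h}=(k+t_0)^{2\sigma h}\big(1+\frac{1}{k+t_0}\big)^{2\sigma h}$, the last factor is at most $e^{2\sigma h/t_0}\le e^{\sigma/2}\le 2$ --- this is exactly where $t_0\ge4h$ enters. The remaining sum $\sum_{k\ge1}(k+t_0)^{2\sigma h-2}$ has positive exponent because $h>2/\sigma$, hence is at most $\int_1^{t+1}(s+t_0)^{2\sigma h-2}\,ds\le\frac{N^{2\sigma h-1}}{2\sigma h-1}$; collecting terms gives $\sum_k\beta_{k,t}^2\le\frac{2h^2}{(2\sigma h-1)N}$, and $\sigma h>1$ then gives $\frac{h}{2\sigma h-1}\le\frac1\sigma$, i.e.\ the claimed $\frac{2h}{\sigma N}$.

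For (c), I would first reduce the inner sum: for $k\ge\tau$ and $k-\tau+1\le\ell\le k$ one has $\ell-1+t_0\ge k-\tau+t_0\ge\frac12(k+t_0)$, since $k\ge\tau$ and $t_0\ge\tau$ force $k+t_0\ge2\tau$; hence $\alpha_{\ell-1}\le2\alpha_k$ and $\sum_{\ell=k-\tau+1}^k\alpha_{\ell-1}\le2\tau\alpha_k$. Thus $\sum_{k=\tau}^t\beta_{k,t}\sum_{\ell=k-\tau+1}^k\alpha_{\ell-1}\le2\tau\sum_{k=\tau}^t\alpha_k\beta_{k,t}$, and $\sum_k\alpha_k\beta_{k,t}$ is bounded exactly as $\sum_k\beta_{k,t}^2$ in (b), only with the single power $\sigma h$ replacing $2\sigma h$ (the ratio factor is now $\le e^{\sigma h/t_0}\le e^{\sigma/4}\le2$ and the leftover sum $\sum_k(k+t_0)^{\sigma h-2}$ again has positive exponent): this yields $\sum_k\alpha_k\beta_{k,t}\le\frac{2h^2}{(\sigma h-1)N}\le\frac{4h}{\sigma N}$, using $\sigma h-1>\frac12\sigma h$. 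Multiplying by $2\tau$ gives $\frac{8h\tau}{\sigma N}$, which is the claim.

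Every individual step is routine calculus, so the only real care needed --- and thus the ``main obstacle'' --- is tracking how the two hypotheses are consumed. The bound $t_0\ge4h$ is used solely to keep each discretization ratio $\big(1+\frac{1}{k+t_0}\big)^{\sigma h}$ (and its square) below an absolute constant, so that replacing $(k+1+t_0)$ by $(k+t_0)$ costs only a factor of $2$. The bound $h>2/\sigma$ (i.e.\ $\sigma h>2$) is used twice: to make the exponents $\sigma h-2$ and $2\sigma h-2$ positive, so that the discrete sums inherit the integral bounds with the correct leading powers $N^{\sigma h-1}$ and $N^{2\sigma h-1}$, and to collapse the trailing constants ($\frac{h}{2\sigma h-1}\le\frac1\sigma$ and $\frac{2h^2}{\sigma h-1}\le\frac{4h}{\sigma}$) into the clean form stated. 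I would also double-check that the two integral comparisons run the right way: $s\mapsto\frac1{s+t_0}$ is decreasing (giving the lower bound in (a)), while $s\mapsto(s+t_0)^p$ with $p>0$ is increasing (giving the upper bounds in (b) and (c)).
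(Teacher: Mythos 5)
Your proposal is correct and follows essentially the same route as the paper's proof: part (a) via $1-x\le e^{-x}$ and an integral comparison, parts (b) and (c) via the bound from (a), the observation that $t_0\ge 4h$ makes $(k+1+t_0)^{p\sigma h}\le 2(k+t_0)^{p\sigma h}$, an increasing-function integral comparison, and (for (c)) the reduction $\alpha_{\ell-1}\le 2\alpha_k$ of the inner sum using $t_0\ge\tau$. The bookkeeping of where $h>2/\sigma$ and $t_0\ge\max(4h,\tau)$ enter matches the paper's usage exactly.
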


\noindent We are now ready to bound $  \Vert \sum_{k=\tau}^{t}  \alpha_k \tilde{B}_{k,t} \epsilon(k) \Vert_v $ and $ \Vert \sum_{k=\tau}^{t}  \alpha_k \tilde{B}_{k,t}  \phi(k) \Vert_v$. Our bound on $ \Vert \sum_{k=\tau}^{t}  \alpha_k \tilde{B}_{k,t}  \phi(k) \Vert_v$ is an immediate consequence of Lemma~\ref{lem:bounded} (b) and Lemma~\ref{lem:stepsize} (c). 

\begin{lemma}\label{lem:phi_bound} The following inequality holds almost surely, 
$$ \Big\Vert \sum_{k=\tau}^{t}  \alpha_k \tilde{B}_{k,t}  \phi(k) \Big\Vert_v \leq \frac{16 \bar\epsilon h\tau}{\sigma} \frac{1}{t+1+t_0}:=C_\phi \frac{1}{t+1+t_0}. $$
\end{lemma}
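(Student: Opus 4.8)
The plan is to chain together the two ingredients the excerpt has already prepared: the pointwise bound $\Vert\phi(k)\Vert_v \le \sum_{\ell=k-\tau+1}^{k} 2\bar\epsilon\,\alpha_{\ell-1}$ from Lemma~\ref{lem:bounded}(b), and the summation bound $\sum_{k=\tau}^{t}\beta_{k,t}\sum_{\ell=k-\tau+1}^{k}\alpha_{\ell-1} \le \frac{8h\tau}{\sigma}\frac{1}{t+1+t_0}$ from Lemma~\ref{lem:stepsize}(c). The only genuinely new step is to pass from the vector quantity $\sum_{k=\tau}^{t}\alpha_k\tilde B_{k,t}\phi(k)$ to a scalar bound, which is a triangle-inequality argument exploiting that each $\tilde B_{k,t}$ is diagonal.

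First I would apply the triangle inequality for $\Vert\cdot\Vert_v$ to get
$\Vert\sum_{k=\tau}^{t}\alpha_k\tilde B_{k,t}\phi(k)\Vert_v \le \sum_{k=\tau}^{t}\alpha_k\Vert\tilde B_{k,t}\phi(k)\Vert_v$. Next, since $\tilde B_{k,t}$ is a diagonal matrix, Proposition~\ref{prop:vnorm} gives $\Vert\tilde B_{k,t}\Vert_v = \sup_i|\tilde b_{k,t,i}|$, and combined with \eqref{eq:b_upperbd} this is at most $\tilde\beta_{k,t}$; hence $\Vert\tilde B_{k,t}\phi(k)\Vert_v \le \tilde\beta_{k,t}\Vert\phi(k)\Vert_v$. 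At this point note that $\alpha_k\tilde\beta_{k,t} = \beta_{k,t}$ by the very definitions $\beta_{k,t}=\alpha_k\prod_{\ell=k+1}^t(1-\alpha_\ell\sigma)$ and $\tilde\beta_{k,t}=\prod_{\ell=k+1}^t(1-\alpha_\ell\sigma)$, so the sum collapses to $\sum_{k=\tau}^{t}\beta_{k,t}\Vert\phi(k)\Vert_v$.

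Then I would substitute Lemma~\ref{lem:bounded}(b) to bound $\Vert\phi(k)\Vert_v$ by $2\bar\epsilon\sum_{\ell=k-\tau+1}^{k}\alpha_{\ell-1}$, pull the constant $2\bar\epsilon$ out, and recognize the remaining double sum $\sum_{k=\tau}^{t}\beta_{k,t}\sum_{\ell=k-\tau+1}^{k}\alpha_{\ell-1}$ as exactly the object bounded in Lemma~\ref{lem:stepsize}(c) by $\frac{8h\tau}{\sigma}\frac{1}{t+1+t_0}$. Multiplying by $2\bar\epsilon$ yields $\frac{16\bar\epsilon h\tau}{\sigma}\frac{1}{t+1+t_0}$, which is the claimed $C_\phi\frac{1}{t+1+t_0}$. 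Every inequality here holds almost surely because Lemmas~\ref{lem:bounded} and \ref{lem:stepsize} do.

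There is no real obstacle: the statement is explicitly advertised as ``an immediate consequence'' of the two earlier lemmas, so the proof is essentially bookkeeping. The one place to be slightly careful is the identity $\alpha_k\tilde\beta_{k,t}=\beta_{k,t}$ and the fact that the diagonal structure of $\tilde B_{k,t}$ lets the weighted infinity norm of the product factor cleanly — if one forgot that $\tilde B_{k,t}$ is diagonal one might worry about a dimension-dependent constant from Proposition~\ref{prop:vnorm}, but the diagonal case avoids that entirely. I would also make sure the index ranges match: Lemma~\ref{lem:stepsize}(c) sums $k$ from $\tau$ to $t$, which is exactly the range appearing here, so no padding or truncation is needed.
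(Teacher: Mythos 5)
Your proposal is correct and follows essentially the same route as the paper's own proof: triangle inequality, the diagonal-matrix norm bound $\Vert\tilde B_{k,t}\Vert_v=\sup_i|\tilde b_{k,t,i}|\leq\tilde\beta_{k,t}$ from Proposition~\ref{prop:vnorm} and \eqref{eq:b_upperbd}, the identity $\alpha_k\tilde\beta_{k,t}=\beta_{k,t}$, and then Lemma~\ref{lem:bounded}(b) combined with Lemma~\ref{lem:stepsize}(c). No gaps.
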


\begin{proof}
We have $\Vert \sum_{k=\tau}^{t}  \alpha_k \tilde{B}_{k,t}  \phi(k) \Vert_v\leq  \sum_{k=\tau}^{t}  \alpha_k \Vert \tilde{B}_{k,t}\Vert_v \Vert  \phi(k) \Vert_v\leq \sum_{k=\tau}^t \beta_{k,t} \sum_{\ell=k-\tau+1}^{k} 2\bar{\epsilon}\alpha_{\ell-1}\leq \frac{16\bar{\epsilon}h\tau}{\sigma(t+t_0+1)}$. Here we have used by Proposition~\ref{prop:vnorm}, $\Vert\tilde{B}_{k,t}\Vert_v = \sup_{i}|\tilde{b}_{k,t,i}|\leq \tilde{\beta}_{k,t}$.
\end{proof}


\begin{lemma}\label{lem:martingale_bd}
     For each $t$, with probability at least $1-\delta$, we have, 
     $$  \Big  \Vert \sum_{k=\tau}^{t}  \alpha_k \tilde{B}_{k,t} \epsilon(k)  \Big \Vert_v \leq  6 \bar{\epsilon} \sqrt{  \frac{(\tau+1) h}{\sigma(t+1+t_0)}   \log(\frac{2(\tau+1) t n}{\delta}) }  .   $$
\end{lemma}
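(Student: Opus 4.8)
\textbf{Proof plan for Lemma~\ref{lem:martingale_bd}.}
The plan is to view $\sum_{k=\tau}^{t}\alpha_k\tilde B_{k,t}\epsilon(k)$ as a sum of ``almost martingale'' increments and apply a concentration inequality, but the obstacle is that $\epsilon(k)$ is only a \emph{shifted} martingale difference: by \eqref{eq:martingale}, $\E[\epsilon(k)\mid\F_{k-\tau}]=0$ rather than $\E[\epsilon(k)\mid\F_k]=0$. The standard trick to deal with this is to split the sum by residue class modulo $(\tau+1)$. Fix $i\in\statespace$ and a residue $r\in\{0,1,\ldots,\tau\}$, and consider the scalar sum $S^{(r)}_{t,i}=\sum_{k\equiv r} \alpha_k\tilde b_{k,t,i}\,\epsilon_i(k)$ over indices $k\in[\tau,t]$ with $k\equiv r\pmod{\tau+1}$. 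Within a single residue class, consecutive terms are $\tau+1$ apart, so the conditioning in \eqref{eq:martingale} makes each new term a genuine martingale difference with respect to the subsampled filtration $\F_{k-\tau}\subseteq\F_{k'}$ for the previous index $k'$ in the same class — provided I also handle the fact that $\tilde b_{k,t,i}$ depends on $D_\ell$ for $\ell>k$ and hence is not $\F_{k-\tau}$-measurable. I would address this by peeling off the normalization: write $\alpha_k\tilde b_{k,t,i}=\alpha_k\tilde b_{k,k',i}\cdot\tilde b_{k',t,i}$ is not clean either, so instead I would just bound $|\alpha_k\tilde b_{k,t,i}|\le\beta_{k,t}$ deterministically (by \eqref{eq:b_upperbd}) and absorb it into the increment bound; more carefully, since $\tilde B_{k,t}$ is $\F_t$-measurable but its entries are bounded by the \emph{deterministic} quantity $\tilde\beta_{k,t}$, the increment $\alpha_k\tilde b_{k,t,i}\epsilon_i(k)$ is bounded a.s. by $\beta_{k,t}\bar\epsilon\,v_i$ (using Lemma~\ref{lem:bounded}(a)), and its conditional mean given the past-in-class is zero because $\E[\epsilon(k)\mid\F_{k-\tau}]=0$ and $\tilde B_{k,t}$'s entries, though not $\F_{k-\tau}$-measurable, satisfy $\E[\tilde b_{k,t,i}\epsilon_i(k)\mid\F_{k-\tau}]=\E[\tilde b_{k,t,i}\,\E[\epsilon_i(k)\mid\F_{\max(k,k-\tau)}]\mid\dots]$ — here I would need the cleaner statement that $\E[\epsilon(k)\mid\F_{t}]$ is not zero, so the right move is to condition on $\F_{k}$ where the martingale part of $\epsilon$ lives, OR re-expose the Azuma argument on the partial sums indexed so that the future randomness in $D_\ell$ is the outermost conditioning. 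The cleanest route: for each residue class, order the terms and apply Azuma–Hoeffding to the martingale $M^{(r)}_m=\sum_{j\le m}\alpha_{k_j}\tilde b_{k_j,t,i}\epsilon_i(k_j)$ — verifying it is a martingale requires that $\E[\alpha_{k_j}\tilde b_{k_j,t,i}\epsilon_i(k_j)\mid \mathcal G_{j-1}]=0$ for the natural filtration $\mathcal G_{j-1}$; since $\tilde b_{k_j,t,i}$ is $\F_{t+1}$-measurable and $\epsilon(k_j)$ is too, the correct filtration to use is $\F_{k_j+1}$ for the $j$-th term, and one checks $\E[\epsilon_i(k_j)\mid\F_{k_{j-1}+1}]=0$ because $k_{j-1}+1\le k_j-\tau$, invoking \eqref{eq:martingale}. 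The subtlety that $\tilde b_{k_j,t,i}$ ``reaches into the future'' past $k_j$ up to $t$ is handled because Azuma only needs a.s.\ bounds on the increments, not measurability of the multiplier.

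Granting the above, the per-class Azuma bound gives: with probability at least $1-\delta'$,
\begin{align*}
\Big|S^{(r)}_{t,i}\Big| \le v_i\,\bar\epsilon\sqrt{2\log(2/\delta')\sum_{k\equiv r}\beta_{k,t}^2}\le v_i\,\bar\epsilon\sqrt{2\log(2/\delta')\cdot\tfrac{2h}{\sigma(t+1+t_0)}},
\end{align*}
where I used Lemma~\ref{lem:stepsize}(b) to bound $\sum_{k}\beta_{k,t}^2\le \tfrac{2h}{\sigma(t+1+t_0)}$ (a fortiori for the subsum over one residue class). Then I union bound over the $n$ coordinates $i$ and the $\tau+1$ residue classes, setting $\delta'=\delta/((\tau+1)n)$, so all $(\tau+1)n$ events hold with probability $\ge 1-\delta$; and I recombine $\sum_{k=\tau}^t\alpha_k\tilde b_{k,t,i}\epsilon_i(k)=\sum_{r=0}^{\tau}S^{(r)}_{t,i}$ via the triangle inequality, picking up a factor $\tau+1$. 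Dividing by $v_i$ and taking $\sup_i$ to pass to $\|\cdot\|_v$:
\begin{align*}
\Big\|\sum_{k=\tau}^{t}\alpha_k\tilde B_{k,t}\epsilon(k)\Big\|_v \le (\tau+1)\,\bar\epsilon\sqrt{\tfrac{4h}{\sigma(t+1+t_0)}\log\!\big(\tfrac{2(\tau+1)n}{\delta}\big)},
\end{align*}
and since $2(\tau+1)\log(2(\tau+1)n/\delta)\le 2(\tau+1)\log(2(\tau+1)tn/\delta)$ one checks the numeric constant $(\tau+1)\sqrt{4/\sigma}\cdot\sqrt{\cdot}$ is dominated by $6\sqrt{(\tau+1)h/\sigma}\cdot\sqrt{\log(\cdot)}$ after pulling one factor of $\sqrt{\tau+1}$ inside — i.e.\ $(\tau+1)\sqrt{4(h/\sigma)}=\sqrt{\tau+1}\cdot 2\sqrt{(\tau+1)h/\sigma}\le 6\sqrt{(\tau+1)h/\sigma}$ when $\tau+1\le 9$; for larger $\tau$ the bookkeeping needs $\log$ absorbing the extra $\tau$, which is where the stated form with $\log(2(\tau+1)tn/\delta)$ rather than $\log(2(\tau+1)n/\delta)$ earns its keep — in fact one should instead bound $|S^{(r)}_{t,i}|$ using the $\ell^2$ norm of the per-class coefficients which is genuinely smaller than the full $\sum\beta_{k,t}^2$, so $\sqrt{(\tau+1)}\cdot\sqrt{\text{(per-class }\ell^2)}\le\sqrt{\sum_k\beta_{k,t}^2\cdot\#\text{classes}/\#\text{classes}}$ type cancellation recovers the clean $\sqrt{(\tau+1)h/\sigma}$ scaling. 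I would present this last constant-chasing step carefully.

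\textbf{Main obstacle.} The crux is the conditioning mismatch: $\epsilon(k)$ is a $\tau$-shifted martingale difference and $\tilde B_{k,t}$ depends on randomness between times $k{+}1$ and $t$. The residue-class decomposition resolves the first issue at the cost of a $(\tau+1)$ factor (partly recouped by the smaller per-class sums), and the a.s.\ boundedness from Lemma~\ref{lem:bounded}(a) together with the deterministic envelope $\beta_{k,t}$ from \eqref{eq:b_upperbd} lets Azuma–Hoeffding tolerate the non-measurable multiplier. Everything else — plugging in Lemma~\ref{lem:stepsize}(b), the union bound over $n(\tau+1)$ events, and matching the numerical constant $6$ — is routine bookkeeping.
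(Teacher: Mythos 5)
Your proposal has a genuine gap at its crux. You correctly identify the two obstacles (the $\tau$-shifted martingale property of $\epsilon(k)$, and the fact that $\tilde b_{k,t,i}=\prod_{\ell=k+1}^{t}(1-\alpha_\ell d_{\ell,i})$ depends on randomness revealed after time $k$), and your residue-class splitting handles the first one exactly as the paper's Lemma~\ref{lem:azuma} does. But your resolution of the second obstacle --- the claim that ``Azuma only needs a.s.\ bounds on the increments, not measurability of the multiplier'' --- is false, and the argument collapses there. Azuma--Hoeffding requires the increments to be a martingale difference sequence: you need $\E[\tilde b_{k_j,t,i}\,\epsilon_i(k_j)\mid\mathcal G_{j-1}]=0$, not merely $\E[\epsilon_i(k_j)\mid\mathcal G_{j-1}]=0$ plus an almost-sure envelope. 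Since $\tilde b_{k_j,t,i}$ involves $d_{\ell,i}$ for $\ell$ up to $t$, which are $\F_{\ell-\tau}$-measurable and hence can be correlated with $\epsilon_i(k_j)$ once $\ell-\tau>k_j+1$, the multiplier cannot be pulled out of the conditional expectation and the product generally has nonzero conditional mean. Replacing $|\tilde b_{k_j,t,i}|$ by the deterministic envelope $\beta_{k_j,t}$ controls magnitudes but not the signed sum itself; a random data-dependent reweighting of mean-zero terms can introduce bias.

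The paper closes exactly this hole with Lemma~\ref{lem:martingale_upperbound_beta}: a pathwise comparison argument constructing an auxiliary sequence $\tilde p_k$ driven by the deterministic coefficients $(1-\alpha_\ell\sigma)$, showing by a sign-preservation induction that $|p_{t+1}|\le|\tilde p_{t+1}|$, at the cost of a supremum over a random restart index $k_0$ and an extra boundary term $2\bar\epsilon v_i\beta_{k_0,t}$. Only after this reduction are the coefficients deterministic, so that the shifted Azuma bound applies; the union bound over $k_0\in[\tau,t]$ is precisely what produces the factor $t$ inside the logarithm of the stated bound --- a factor your route has no mechanism to generate, which you noticed but tried to rationalize via constant-chasing. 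Your secondary bookkeeping (recovering $\sqrt{\tau+1}$ rather than $\tau+1$ via per-class $\ell^2$ sums and Cauchy--Schwarz) matches the paper's Lemma~\ref{lem:azuma} and is fine, but it does not rescue the main step. To repair the proof you need either the paper's comparison-sequence lemma or some other device that genuinely decouples the future-dependent products from the noise before invoking concentration.
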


\noindent We now focus on proving Lemma~\ref{lem:martingale_bd}. Recall $\epsilon(t)$ is $\F_{t+1}$ measurable is a ``shifted'' martingale difference sequence in the sense that $\E \epsilon(t)|\F_{t-\tau} = 0$ (cf. \eqref{eq:martingale}). We will use a variant of the Azuma-Hoeffding bound in Lemma~\ref{lem:azuma} that handles our ``shifted'' Martingale difference sequence. The proof of Lemma~\ref{lem:azuma} is postponed to Appendix~\ref{appendix:sec:azuma}.

\begin{lemma}\label{lem:azuma}
	Let $X_t$ be a $\mathcal{F}_t$-adapted stochastic process, satisfying
	$ \E X_t | \mathcal{F}_{t-\tau}=0$. Further, $|X_t|\leq \bar X_t$ almost surely. Then with probability $1-\delta$, we have,
	$  |\sum_{k=0}^t X_{k}  |  \leq  \sqrt{ 2\tau  \sum_{k=0}^t \bar{X}_{k}^2 \log(\frac{2\tau}{\delta}) } $. 
\end{lemma}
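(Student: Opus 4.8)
The plan is to reduce the ``shifted'' martingale difference sequence to $\tau$ ordinary martingale difference sequences by subsampling the time index modulo $\tau$, apply the classical Azuma--Hoeffding inequality to each, and then recombine via a union bound and Cauchy--Schwarz. Concretely, for each residue $r\in\{0,1,\ldots,\tau-1\}$ I would consider the subsampled sequence $\{X_{r+j\tau}\}_{j\geq 0}$ together with the subsampled filtration $\mathcal{G}^{(r)}_j:=\F_{r+j\tau}$. Because $X_t$ is $\F_t$-adapted, each $X_{r+j\tau}$ is $\mathcal{G}^{(r)}_j$-measurable; and because $\F_{r+j\tau-\tau}=\F_{r+(j-1)\tau}=\mathcal{G}^{(r)}_{j-1}$, the hypothesis $\E X_t|\F_{t-\tau}=0$ gives exactly $\E[X_{r+j\tau}|\mathcal{G}^{(r)}_{j-1}]=0$. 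Hence each subsequence is a genuine martingale difference sequence with respect to $\{\mathcal{G}^{(r)}_j\}_j$, carrying the bound $|X_{r+j\tau}|\leq \bar X_{r+j\tau}$.

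Next I would apply the two-sided Azuma--Hoeffding inequality to the partial sum $S_r:=\sum_{k\leq t,\,k\equiv r\ (\mathrm{mod}\ \tau)} X_k$, obtaining $\mathbb{P}(|S_r|>a)\leq 2\exp(-a^2/(2\sum_{k\equiv r}\bar X_k^2))$ for each $r$. Setting the right-hand side equal to $\delta/\tau$ yields the threshold $a_r=\sqrt{2(\sum_{k\equiv r}\bar X_k^2)\log(2\tau/\delta)}$, so that $\mathbb{P}(|S_r|>a_r)\leq \delta/\tau$. A union bound over the $\tau$ residue classes then guarantees that, with probability at least $1-\delta$, the bound $|S_r|\leq a_r$ holds simultaneously for all $r$.

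Finally, on this event I would combine the per-residue bounds. Since the residue classes partition $\{0,1,\ldots,t\}$, we have $\sum_{k=0}^t X_k=\sum_{r=0}^{\tau-1} S_r$, so $|\sum_{k=0}^t X_k|\leq \sum_{r=0}^{\tau-1}|S_r|\leq \sqrt{2\log(2\tau/\delta)}\sum_{r=0}^{\tau-1}\sqrt{\sum_{k\equiv r}\bar X_k^2}$. Applying Cauchy--Schwarz to the outer sum over the $\tau$ residues, $\sum_{r=0}^{\tau-1}\sqrt{\sum_{k\equiv r}\bar X_k^2}\leq \sqrt{\tau}\,\sqrt{\sum_{r=0}^{\tau-1}\sum_{k\equiv r}\bar X_k^2}=\sqrt{\tau}\,\sqrt{\sum_{k=0}^t \bar X_k^2}$, which collapses precisely to the claimed bound $\sqrt{2\tau\sum_{k=0}^t\bar X_k^2\log(2\tau/\delta)}$.

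The main thing to get right is the first step: verifying that subsampling genuinely produces a martingale difference sequence, i.e., that the shift condition $\E X_t|\F_{t-\tau}=0$ aligns with the subsampled filtration so that the conditional mean vanishes given the previous element of the chain. Everything after that — the standard Azuma--Hoeffding tail, the union bound calibrated at level $\delta/\tau$, and the Cauchy--Schwarz step that produces the $\sqrt{\tau}$ factor — is routine. I would also note that for the classical Azuma--Hoeffding constants $c_j=\bar X_{r+j\tau}$ to be valid it suffices that $\bar X_t$ be deterministic, which is exactly the situation in the intended application, where $\bar X_k=\alpha_k\tilde\beta_{k,t}\bar\epsilon$ is a deterministic quantity.
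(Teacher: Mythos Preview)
Your proposal is correct and follows essentially the same route as the paper's proof: subsample modulo $\tau$ to obtain $\tau$ genuine martingale difference sequences, apply Azuma--Hoeffding to each with failure probability $\delta/\tau$, union bound, and recombine via Cauchy--Schwarz to produce the $\sqrt{\tau}$ factor. Your remark that the classical Azuma--Hoeffding step requires the bounds $\bar X_t$ to be deterministic is a valid point that the paper leaves implicit; indeed in the application $\bar X_k=\bar\epsilon\beta_{k,t}$ is deterministic.
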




\noindent To prove Lemma~\ref{lem:martingale_bd}, recall that $\sum_{k=\tau}^{t} \alpha_k \tilde{B}_{k,t} \epsilon(k)$ is a random vector in $\R^{\statespace}$, with its $i$'th entry  
\begin{align}
    \sum_{k=\tau}^{t} \alpha_{k} \epsilon_i(k) \prod_{\ell=k+1}^{t} (1- \alpha_\ell d_{\ell,i}), \label{eq:martingale_entrywise}
\end{align}
with $d_{\ell,i}\geq \sigma$ almost surely, cf. \eqref{eq:d_lowerbd}. 
Fixing $i$, as have been shown in \eqref{eq:martingale}, $\epsilon_i(k)$ is a $\mathcal{F}_{k+1}$ adapted stochastic process satisfying $\E \epsilon_i(k) | \mathcal{F}_{k-\tau}=0$.  
However, $\prod_{\ell=k+1}^{t} (1- \alpha_\ell d_{\ell,i})$ is not $\mathcal{F}_{k-\tau}$-measurable, and as such we cannot directly apply the Azuma-Hoeffding bound in Lemma~\ref{lem:azuma} to  \eqref{eq:martingale_entrywise}. To proceed, we need to get rid of the randomness of $\prod_{\ell=k+1}^{t} (1- \alpha_\ell d_{\ell,i})$ in the summation \eqref{eq:martingale_entrywise}.This is done in Lemma~\ref{lem:martingale_upperbound_beta} which shows that the absolute value of quantity \eqref{eq:martingale_entrywise} can be upper bounded by the sup of another quantity where the randomness caused by $\prod_{\ell=k+1}^{t} (1- \alpha_\ell d_{\ell,i})$ is removed through the use of $d_{\ell,i}\geq \sigma$, and to this new quantity we can directly apply Lemma~\ref{lem:azuma}. The proof of Lemma~\ref{lem:martingale_upperbound_beta} is postponed to Appendix~\ref{subsec:martingale_upperbound_beta}.

\begin{lemma}\label{lem:martingale_upperbound_beta}
    For each $i$, we have almost surely,
{\small    $$\big|\sum_{k=\tau}^{t} \alpha_{k} \epsilon_i(k)  \prod_{\ell=k+1}^{t} (1- \alpha_\ell d_{\ell,i}) \big| \leq  \sup_{\tau\leq k_0\leq t}\bigg(\big|\sum_{k=k_0+1}^{t}  \epsilon_i(k)\beta_{k,t}\big| + 2 \bar{\epsilon}v_i {\beta}_{k_0,t} \bigg).$$}
\end{lemma}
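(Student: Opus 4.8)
The goal is to bound $\bigl|\sum_{k=\tau}^{t} \alpha_{k} \epsilon_i(k) \prod_{\ell=k+1}^{t}(1-\alpha_\ell d_{\ell,i})\bigr|$ where the products $\prod_{\ell=k+1}^t(1-\alpha_\ell d_{\ell,i})$ carry randomness that is not measurable with respect to $\F_{k-\tau}$, which blocks a direct Azuma application. The plan is to peel off the tail product terms one index at a time, trading each random factor $(1-\alpha_\ell d_{\ell,i})$ for the deterministic bound $(1-\alpha_\ell \sigma)$ coming from \eqref{eq:d_lowerbd}, while tracking the ``cost'' of each swap in a supremum over where the peeling stops.

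Concretely, I would fix $i$ and work backwards from $\ell=t$. Write $S_{k_0} := \sum_{k=k_0+1}^{t} \epsilon_i(k)\,\prod_{\ell=k+1}^{t}(1-\alpha_\ell d_{\ell,i})$ together with the leftover contribution from indices $k \le k_0$ that still carries the factor $\prod_{\ell=k_0+1}^t(1-\alpha_\ell d_{\ell,i})$; the claim is that $|S_{k_0}|$ plus that leftover is nonincreasing (up to a small additive slack) as $k_0$ decreases, so the whole sum at $k_0=\tau-1$ is bounded by the sup over $k_0$ of a cleaner quantity. The mechanism: when we lower $k_0$ by one, the term $\epsilon_i(k_0)$ with its full random product moves into $S$, but inside $S$ we can replace, for the purposes of an upper bound, the random common factor $\prod_{\ell=k_0+1}^{t}(1-\alpha_\ell d_{\ell,i})$ by the deterministic $\prod_{\ell=k_0+1}^{t}(1-\alpha_\ell \sigma)$; the error incurred is controlled because $\epsilon_i$ is bounded by $\bar\epsilon v_i$ (Lemma~\ref{lem:bounded}(a) in $\|\cdot\|_v$) and because $0 \le 1-\alpha_\ell d_{\ell,i} \le 1-\alpha_\ell\sigma \le 1$, which makes each such replacement only increase the running sum in absolute value by a bounded amount. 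After all factors below the stopping index are deterministic, $\sum_{k=k_0+1}^t \epsilon_i(k)\beta_{k,t}$ is exactly the ``de-randomized'' sum, and the residual term is bounded by $2\bar\epsilon v_i \tilde\beta_{k_0,t}\alpha_{k_0}^{-1}\beta_{k_0,t}$-type quantities — matching the stated $2\bar\epsilon v_i \beta_{k_0,t}$ (using $\beta_{k_0,t}=\alpha_{k_0}\tilde\beta_{k_0,t}$, so $v_i\beta_{k_0,t}$ captures the magnitude of the dropped tail contribution weighted back in). I would make this precise via an Abel-summation / induction on $k_0$ argument, or equivalently a telescoping inequality $|\text{(quantity at }k_0-1)| \le |\text{(quantity at }k_0)|$-style bound after the substitution.

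An alternative, perhaps cleaner route: define $a_{k_0} := \prod_{\ell=k_0+1}^{t}(1-\alpha_\ell d_{\ell,i})$ and $p_{k_0} := \prod_{\ell=k_0+1}^t(1-\alpha_\ell\sigma)$ and note $a_{k_0} = a_{k_0+1}(1-\alpha_{k_0+1}d_{k_0+1,i})$, $0\le a_{k_0}\le a_{k_0+1}$; then rewrite the target sum using summation by parts so that the random products appear only as increments $a_{k-1}-a_k \ge 0$, each of which is dominated termwise. One still ends up needing the supremum over the split point because the partial sums $\sum_{k=k_0+1}^t \epsilon_i(k)\beta_{k,t}$ are what remain after full de-randomization and these are not monotone in $k_0$ — hence the $\sup_{\tau\le k_0\le t}$ in the statement.

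The main obstacle is the bookkeeping for the substitution step: showing rigorously that replacing a random factor $\le (1-\alpha_\ell\sigma)$ (but correlated with the $\epsilon_i(k)$'s through the filtration) by its deterministic upper bound does not destroy the Azuma-applicability and only costs the explicit $2\bar\epsilon v_i\beta_{k_0,t}$ slack. This is delicate precisely because $d_{\ell,i}$ is $\F_{\ell-\tau}$-measurable while $\epsilon_i(k)$ is only a shifted martingale difference, so naive conditioning arguments fail; the resolution is to keep the products \emph{deterministic} (the $\beta$'s) inside the summand before ever invoking Lemma~\ref{lem:azuma}, and to absorb all the randomness into the single supremum term, where it is handled by a worst-case (pathwise) bound rather than in expectation. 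Once Lemma~\ref{lem:martingale_upperbound_beta} is in place, applying Lemma~\ref{lem:azuma} to $\sum_{k=k_0+1}^t \epsilon_i(k)\beta_{k,t}$ for each of the $O(t)$ values of $k_0$, together with $\sum_k \beta_{k,t}^2 \le \frac{2h}{\sigma}\frac{1}{t+1+t_0}$ from Lemma~\ref{lem:stepsize}(b) and a union bound over $k_0$ and over $i$, yields Lemma~\ref{lem:martingale_bd}.
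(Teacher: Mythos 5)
Your high-level plan --- de-randomize the products $\prod_{\ell=k+1}^{t}(1-\alpha_\ell d_{\ell,i})$ into the deterministic $\beta_{k,t}$, pay a residual of size $2\bar\epsilon v_i\beta_{k_0,t}$ at a split point $k_0$, and take a supremum over $k_0$ because the split point is random --- matches the shape of the paper's argument. But the step you rely on to justify the swap is false as stated: for a signed sum, replacing nonnegative coefficients $a_k$ by larger ones $b_k\ge a_k\ge 0$ does \emph{not} imply $|\sum_k\epsilon_i(k)a_k|\le|\sum_k\epsilon_i(k)b_k|$ (take $\epsilon_i=(1,-1)$, $a=(1,0)$, $b=(1,1)$). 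The same objection applies to your Abel-summation variant: termwise domination of the nonnegative increments $a_{k-1}-a_k$ does not control the absolute value of a sum whose summands change sign. So the claim that ``each such replacement only increases the running sum in absolute value'' is precisely the gap, and it cannot be repaired by bookkeeping alone.

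The paper's resolution is a sign-preservation argument that your proposal is missing. Write the target as the terminal value of the recursion $p_{k}=(1-\alpha_{k-1}d_{k-1,i})p_{k-1}+\alpha_{k-1}\epsilon_i(k-1)$ with $p_\tau=0$, and let $k_0$ be the \emph{last} index at which $(1-\alpha_{k}d_{k,i})|p_{k}|\le\alpha_{k}|\epsilon_i(k)|$. After $k_0$ the carried term strictly dominates every increment, so the sign of $p_k$ never flips on $(k_0,t]$; on a sign-constant stretch, replacing the contraction factor $1-\alpha_\ell d_{\ell,i}$ by the larger $1-\alpha_\ell\sigma$ genuinely increases $|p_k|$ step by step (this is the only place $d_{\ell,i}\ge\sigma$ is used, and the monotonicity holds only because the sign is constant). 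The restart value satisfies $|p_{k_0+1}|\le 2\alpha_{k_0}|\epsilon_i(k_0)|\le 2\alpha_{k_0}\bar\epsilon v_i$ by the defining property of $k_0$, which after multiplying by $\tilde\beta_{k_0,t}$ gives exactly the $2\bar\epsilon v_i\beta_{k_0,t}$ residual via $\beta_{k_0,t}=\alpha_{k_0}\tilde\beta_{k_0,t}$. Without identifying this particular random $k_0$ and the resulting sign constancy, the comparison you propose does not go through.
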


\noindent With the help of Lemma~ \ref{lem:martingale_upperbound_beta}, we use the Azuma-Hoeffding bound to prove Lemma~\ref{lem:martingale_bd}.

\bigskip
\noindent\textbf{Proof of Lemma~\ref{lem:martingale_bd}.}
Fix $i$ and $\tau\leq k_0\leq t$. As have been shown in \eqref{eq:martingale}, $\frac{1}{v_i}\epsilon_i(k)\beta_{k,t}$ is a $\mathcal{F}_{k+1}$ adapted stochastic process satisfying $\E  \frac{1}{v_i}\epsilon_i(k) \beta_{k,t} | \mathcal{F}_{k-\tau}=0$.
Also by Lemma~\ref{lem:bounded}(a), $|\frac{1}{v_i}\epsilon_i(k) \beta_{k,t}|\leq \bar{\epsilon}\beta_{k,t}$ almost surely. As a result, we can use the Azuma-Hoeffding bound in Lemma~\ref{lem:azuma} to get with probability $1-\delta$,
 {\small$$ \big|\sum_{k=k_0+1}^{t}  \frac{1}{v_i} \epsilon_i(k)\beta_{k,t}\big| \leq \bar{\epsilon} \sqrt{ 2(\tau+1)  \sum_{k=k_0+1}^{t} \beta_{k,t}^2 \log(\frac{2(\tau+1)}{\delta}) } .  $$}By a union bound on $\tau\leq k_0\leq t$, we get with probability $1-\delta$,
{\small \begin{align*}
   \frac{1}{v_i} \sup_{\tau\leq k_0\leq t }\big|\sum_{k=k_0+1}^{t}  \epsilon_i(k) \beta_{k,t}\big| 
    &\leq    \bar{\epsilon} \sqrt{ 2(\tau+1)  \sum_{k=\tau+1}^{t} \beta_{k,t}^2 \log(\frac{2(\tau+1) t}{\delta}) } .
\end{align*}}Then, by Lemma~\ref{lem:martingale_upperbound_beta}, we have with probability $1-\delta$, 
{\small\begin{align*}
    \frac{1}{v_i}\Big|\sum_{k=\tau}^{t} \alpha_{k} \epsilon_i(k) \prod_{\ell=k+1}^{t} (1- \alpha_\ell d_{\ell,i})\Big| &\leq  \sup_{\tau\leq k_0\leq t}\bigg(\frac{1}{v_i}\big|\sum_{k=k_0+1}^{t}  \epsilon_i(k)\beta_{k,t}\big| + 2 \bar{\epsilon} {\beta}_{k_0,t} \bigg) \\
    &\leq \bar{\epsilon} \sqrt{ 2(\tau+1)  \sum_{k=\tau+1}^{t} \beta_{k,t}^2 \log(\frac{2(\tau+1) t}{\delta}) } +  \sup_{\tau\leq k_0\leq t}2 \bar{\epsilon} {\beta}_{k_0,t} \\
    &\leq  2 \bar{\epsilon} \sqrt{  \frac{(\tau+1) h}{\sigma(t+1+t_0)}   \log(\frac{2(\tau+1) t}{\delta}) } + \sup_{\tau\leq k_0\leq t} 2 \bar{\epsilon} \frac{h}{k_0+t_0} \Big( \frac{k_0+1+t_0}{t+1+t_0}\Big)^{\sigma h}\\
    &\leq  2 \bar{\epsilon} \sqrt{  \frac{(\tau+1) h}{\sigma(t+1+t_0)}   \log(\frac{2 (\tau+1) t}{\delta}) } + 2 \bar{\epsilon} \frac{h}{t+t_0} \\
    &\leq 6 \bar{\epsilon} \sqrt{  \frac{(\tau+1) h}{\sigma(t+1+t_0)}   \log(\frac{2 (\tau+1) t}{\delta}) }  ,
\end{align*}}where in the third inequality, we have used the bounds on $\beta_{k,t}$ in Lemma~\ref{lem:stepsize}. Finally, applying the union bound over $i\in \statespace$ will lead to the desired result. 
\qed

\textbf{Step 3: Bounding the error sequence. } We are now ready to use the error decomposition in Lemma~\ref{lem:error_recursive} and the bound on $  \Vert \sum_{k=\tau}^{t}  \alpha_k \tilde{B}_{k,t} \epsilon(k) \Vert_v $ and $ \Vert \sum_{k=\tau}^{t}  \alpha_k \tilde{B}_{k,t}  \phi(k) \Vert_v$ in Lemma~\ref{lem:martingale_bd} and Lemma~\ref{lem:phi_bound} to bound $a_t = \Vert x(t) - x^*\Vert_v$. Recall, we want to show that, with probability $1-\delta$,
\begin{align}
    a_T\leq \frac{C_a}{\sqrt{T+t_0}} + \frac{C_a'}{T+t_0},  \label{appendix:critic:eq:step5_errorbound}
\end{align}
where
$C_a =  \frac{12\bar{\epsilon}}{1-\gamma} \sqrt{  \frac{(\tau+1) h}{\sigma}   \log(\frac{2(\tau+1) T^2 n}{\delta}) }$, $C_a'= \frac{4}{1-\gamma}\max(C_\phi, 2\bar{x}(\tau+t_0)).$ To prove \eqref{appendix:critic:eq:step5_errorbound}, we start by applying Lemma~\ref{lem:martingale_bd} to $t\leq T$ with $\delta$ replaced by $\delta/T$. Then, using a union bound, we get with probability $1-\delta$, for any $t\leq T$, $\Vert \sum_{k=\tau}^{t} \alpha_k \tilde{B}_{k,t} \epsilon(k) \Vert_v \leq C_\epsilon \frac{1}{\sqrt{t+1+t_0}},$
where $ C_\epsilon = 6 \bar{\epsilon} \sqrt{  \frac{(\tau+1) h}{\sigma}   \log(\frac{2(\tau+1) T^2 n}{\delta}) }   $. Combine the above with Lemma~\ref{lem:error_recursive} and use Lemma~\ref{lem:phi_bound}, we get with probability $1-\delta$, for all $\tau\leq t\leq T$,
{\small \begin{align}
 a_{t+1} & \leq \tilde{\beta}_{\tau-1,t}a_\tau  + \gamma \sup_{i \in \statespace} \sum_{k=\tau}^{t}  b_{k,t,i}  a_k +  \Vert \sum_{k=\tau}^{t}  \alpha_k \tilde{B}_{k,t} \epsilon(k)\Vert_v  + \Vert \sum_{k=\tau}^{t}  \alpha_k \tilde{B}_{k,t}  \phi(k) \Vert_v \nonumber\\
 &\leq \tilde{\beta}_{\tau-1,t}a_\tau  + \gamma \sup_{i \in \statespace} \sum_{k=\tau}^{t}  b_{k,t,i}  a_k +  \frac{C_\epsilon}{\sqrt{t+1+t_0}}  + \frac{C_\phi}{t+1+t_0}.
    \label{eq:step_3_errorrecursive}
\end{align}
}We now condition on \eqref{eq:step_3_errorrecursive} and use induction to show \eqref{appendix:critic:eq:step5_errorbound}. Eq. \eqref{appendix:critic:eq:step5_errorbound} is true for $t=\tau$, as $\frac{C_a'}{\tau+t_0} \geq \frac{8}{1-\gamma} \bar{x} \geq a_\tau $, where we have used $a_\tau = \Vert x(\tau)-x^*\Vert_v \leq \Vert x(\tau)\Vert_v + \Vert x^*\Vert_v\leq 2\bar{x}$ by the definition of $\bar{x}$.  Then, assuming \eqref{appendix:critic:eq:step5_errorbound}  is true for up to $k\leq t$, we have by \eqref{eq:step_3_errorrecursive},
{\small\begin{align*}
    a_{t+1} &\leq \tilde{\beta}_{\tau-1,t}a_\tau  + \gamma \sup_{i\in\statespace}\sum_{k=\tau}^{t}  b_{k,t,i} [\frac{C_a}{\sqrt{k+t_0}} +\frac{C_a'}{k+t_0}] + C_\epsilon\frac{1}{\sqrt{t+1+t_0}} + C_\phi \frac{1}{t+1+t_0} \\
    &\leq \tilde{\beta}_{\tau-1,t}a_\tau  + \gamma C_a \sup_{i\in\statespace}\sum_{k=\tau}^{t}  b_{k,t,i}  \frac{1}{\sqrt{k+t_0}} + \gamma C_a' \sup_{i\in\statespace}\sum_{k=\tau}^{t}  b_{k,t,i} \frac{1}{{k+t_0}}  + C_\epsilon\frac{1}{\sqrt{t+1+t_0}} + C_\phi \frac{1}{t+1+t_0} .
\end{align*}}We use the following auxiliary Lemma, whose proof is provided in Appendix~\ref{appendix:critic:subsec:b_kt_bound}.
\begin{lemma}\label{lem:b_kt_bound}
Recall $\alpha_k = \frac{h}{k+t_0}$, and $b_{k,t,i} = \alpha_k d_{k,i} \prod_{\ell=k+1}^{t}(1-\alpha_\ell d_{\ell,i}) $, here $d_{k,i}\geq \sigma$. If $\sigma h (1-\sqrt{\gamma}) \geq 1$, $t_0\geq 1$, and $\alpha_0\leq \frac{1}{2}$, then, for any $i\in\statespace$, and any $0<\omega\leq 1$, we have
$\sum_{k=\tau}^{t}  b_{k,t,i} \frac{1}{(k+t_0)^\omega} \leq \frac{1}{\sqrt{\gamma}{(t+1+t_0)}^\omega}.  $
\end{lemma}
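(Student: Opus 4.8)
The plan is to prove the inequality by induction on $t$, exploiting the one-step recursion satisfied by the diagonal coefficients $b_{k,t,i}$. Fix $i\in\statespace$ and set $S_t := \sum_{k=\tau}^{t} b_{k,t,i}\,(k+t_0)^{-\omega}$ (the claim is vacuous when $t<\tau$, since the sum is empty, so assume $t\ge\tau$). Peeling the last factor off the product in $b_{k,t,i}=\alpha_k d_{k,i}\prod_{\ell=k+1}^t(1-\alpha_\ell d_{\ell,i})$ gives $b_{k,t,i}=(1-\alpha_t d_{t,i})\,b_{k,t-1,i}$ for $k\le t-1$ and $b_{t,t,i}=\alpha_t d_{t,i}$, hence
\[
S_t = (1-\alpha_t d_{t,i})\,S_{t-1} + \frac{\alpha_t d_{t,i}}{(t+t_0)^\omega}.
\]
Since $\alpha_t d_{t,i}\le\alpha_t\le\alpha_0\le\frac12$, the weight $1-\alpha_t d_{t,i}$ is nonnegative, so this is a genuine convex-type recursion and passing the inductive hypothesis through it preserves the inequality.

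For the base case $t=\tau$, the claim reduces to $\alpha_\tau d_{\tau,i}\big(\frac{\tau+1+t_0}{\tau+t_0}\big)^\omega\le \frac{1}{\sqrt\gamma}$, which follows from $\alpha_\tau\le\frac12$, $d_{\tau,i}\le1$, the bound $\big(\frac{\tau+1+t_0}{\tau+t_0}\big)^\omega\le 2$ (using $t_0\ge1$ and $\omega\le1$), and $\gamma\le1$. For the inductive step, assume $S_{t-1}\le \frac{1}{\sqrt\gamma}(t+t_0)^{-\omega}$; substituting into the recursion and collecting terms yields
\[
S_t \;\le\; \frac{1}{(t+t_0)^\omega}\Big[\frac{1-\alpha_t d_{t,i}}{\sqrt\gamma}+\alpha_t d_{t,i}\Big]
\;=\; \frac{1-\alpha_t d_{t,i}(1-\sqrt\gamma)}{\sqrt\gamma\,(t+t_0)^\omega}.
\]
It then remains to verify $1-\alpha_t d_{t,i}(1-\sqrt\gamma)\le\big(\frac{t+t_0}{t+1+t_0}\big)^\omega$. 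I would lower-bound the right-hand side using the elementary inequality $a^\omega\ge a$ valid for $a\in(0,1)$ and $\omega\in(0,1]$, which reduces the target to $\alpha_t d_{t,i}(1-\sqrt\gamma)\ge\frac{1}{t+1+t_0}$; and this is immediate since $\alpha_t d_{t,i}(1-\sqrt\gamma)\ge\frac{h\sigma(1-\sqrt\gamma)}{t+t_0}\ge\frac{1}{t+t_0}\ge\frac{1}{t+1+t_0}$, using $d_{t,i}\ge\sigma$ and the hypothesis $\sigma h(1-\sqrt\gamma)\ge1$.

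The computation is largely mechanical once the framing is chosen; the two genuine decisions are (i) running the induction on the weighted partial sum $S_t$ rather than on individual coefficients, and (ii) using the ``wrong-direction-looking'' bound $a^\omega\ge a$ for $a\le1$ — note that a naive concavity argument would instead give $a^\omega\le 1-\omega(1-a)$, which points the opposite way and is useless here. The subtle point, and the only place the hypotheses are used tightly, is that $t_0\ge1$ and $\alpha_0\le\frac12$ are exactly what make the $t=\tau$ term fit within the $1/\sqrt\gamma$ slack, while the appearance of $1-\sqrt\gamma$ (rather than $1-\gamma$) in the step-size condition is precisely what makes the inductive step close with equality-type tightness; so neither can be relaxed.

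\qed
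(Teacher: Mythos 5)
Your proof is correct and follows essentially the same route as the paper: induction on the weighted sum $\sum_{k=\tau}^{t} b_{k,t,i}(k+t_0)^{-\omega}$ via the one-step recursion $S_t=(1-\alpha_t d_{t,i})S_{t-1}+\alpha_t d_{t,i}(t+t_0)^{-\omega}$, with the same base case. The only (immaterial) difference is how the inductive step is closed: the paper bounds $\bigl[1-\tfrac{\sigma h(1-\sqrt\gamma)}{t+t_0}\bigr]\bigl(1+\tfrac{1}{t+t_0}\bigr)^\omega\le 1$ via $1+x\le e^x$, while you use $a^\omega\ge a$ for $a\in(0,1)$ to reduce to the same linear condition $\sigma h(1-\sqrt\gamma)\ge 1$.
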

With Lemma~\ref{lem:b_kt_bound}, and using the bound on $\tilde{\beta}_{\tau-1,t}$ in Lemma~\ref{lem:stepsize} (a), we have 
{\small
\begin{align*}
    a_{t+1}& \leq \tilde{\beta}_{\tau-1,t}a_\tau  + \sqrt{\gamma} C_a \frac{1}{\sqrt{t+1+t_0}} + \sqrt{\gamma} C_a' \frac{1}{{t+1+t_0}} + C_\epsilon\frac{1}{\sqrt{t+1+t_0}} + C_\phi \frac{1}{t+1+t_0} \\
    &\leq  \underbrace{\sqrt{\gamma} C_a \frac{1}{\sqrt{t+1+t_0}} + C_\epsilon\frac{1}{\sqrt{t+1+t_0}}}_{:=F_t}  + \underbrace{\sqrt{\gamma} C_a' \frac{1}{{t+1+t_0}} + C_\phi \frac{1}{t+1+t_0}+ \Big(\frac{\tau+t_0}{t+1+t_0}\Big)^{\sigma h}a_\tau}_{:= F_t'}  .
\end{align*}}To finish the induction, it suffices to show $F_t\leq \frac{C_a}{\sqrt{t+1+t_0}}$ and $F_t' \leq \frac{C_a'}{t+1+t_0}$. To see this,
{\small \begin{align*}
    F_t\frac{\sqrt{t+1+t_0}}{C_a} &= \sqrt{\gamma} + \frac{C_\epsilon}{C_a}, \quad
    F_t'\frac{t+1+t_0}{C_a'} = \sqrt{\gamma} +  \frac{C_\phi}{C_a'} + \frac{a_\tau(\tau+t_0)}{C_a'} \frac{(\tau+t_0)^{\sigma h-1}}{(t+1+t_0)^{\sigma h - 1}}.
\end{align*}}It suffices to show that,
    $\frac{C_\epsilon}{C_a}  \leq  1 - \sqrt{\gamma}$,
    $\frac{C_\phi}{C_a'} \leq \frac{1 - \sqrt{\gamma}}{2}$, and
    $\frac{a_\tau {(\tau+t_0)}}{C_a'}  \leq \frac{1 - \sqrt{\gamma}}{2}$.
Using $a_\tau\leq 2\bar{x}$, one can check that $C_a$ and $C_a'$ satisfy the above three inequalities, which concludes the proof. \qed 



\bibliography{refs}

\appendix
\section{Proofs of Auxiliary Propositions in Section~\ref{sec:SA} and Section~\ref{sec:q}}
\subsection{Proof of Proposition~\ref{prop:vnorm}}\label{appendix:vnorm}
Let $x\in\R^\statespace$ be any vector s.t. $\Vert x\Vert_v = 1$. Then, 
\begin{align*}
    \Vert Ax\Vert_v = \sup_{i\in\statespace} \frac{1}{v_i} \Big|\sum_{j\in\statespace} a_{ij} x_j\Big| \leq  \sup_{i\in\statespace} \sum_{j\in\statespace}  |a_{ij}| \frac{v_j }{v_i} \frac{|x_j|}{v_j} \leq  \sup_{i\in\statespace}\sum_{j\in\statespace}  |a_{ij}| \frac{v_j }{v_i}. 
\end{align*}
As a result, $\Vert A\Vert_v\leq\sup_{i\in\statespace}\sum_{j\in\statespace}  |a_{ij}| \frac{v_j }{v_i} $. On the other hand, let $i^* = \arg\max_{i\in\statespace} \sum_{j\in\statespace}  |a_{ij}| \frac{v_j }{v_i}$ (ties broken arbitrarily). And we set $x = [x_1,\ldots,x_n]^\top$ with $x_j = v_j \text{sign}(a_{i^*j})$, where $\text{sign}(z)=1$ when $z\geq 0$, and $-1$ otherwise. Then, clearly $\Vert x\Vert_v = 1$, and 
\begin{align*}
    \Vert Ax\Vert_v \geq \frac{1}{v_{i^*}} \Big|\sum_{j\in\statespace} a_{i^*j} x_j\Big| = \frac{1}{v_{i^*}} \Big|\sum_{j\in\statespace} a_{i^*j} \text{sign}(a_{i^*j}) v_j\Big| = \sum_{j\in\statespace} |a_{i^*j}|\frac{v_j}{v_i^*} = \sup_{i\in\statespace}\sum_{j\in\statespace}  |a_{ij}| \frac{v_j }{v_i}.
\end{align*}
This shows $\Vert A\Vert_v \geq \sup_{i\in\statespace}\sum_{j\in\statespace}  |a_{ij}| \frac{v_j }{v_i}$ and finishes the proof. \qed

\subsection{Proof of Proposition~\ref{prop:ergodic}} \label{appendix:ergodic}
Let $d$ be the distribution of $i_t$ conditioned on $\mathcal{F}_{t-\tau}$. Then, by \citet[eq. (4.33)]{levin2017markov}, 
$$\text{TV}( d, \stationarydist) \leq 2^{- \lceil \log_2(\frac{2}{\stationarydist_{\min}} )\rceil } \leq  \frac{\stationarydist_{\min}}{2},$$
where $\text{TV}$ means the total-variation distance. As a result, for each $i\in\statespace$, $d_i \geq \stationarydist_i - |\stationarydist_i - d_i |\geq \stationarydist_{\min} -\text{TV}( d, \stationarydist) \geq \frac{1}{2} \stationarydist_{\min} $. This shows that for any $i$, $\mathbb{P}(i_t = i|\F_{t-\tau}) \geq \frac{1}{2}\stationarydist_{\min}$ which verifies Assumption~\ref{assump:exploration}. \qed

\subsection{Proof of Proposition~\ref{prop:x_bounded}}\label{appendix:x_bounded}

Note that by Assumption~\ref{assump:contraction}(a), we have,
$$\Vert F(x)\Vert_v \leq \Vert F(x) - F(x^*)\Vert_v+\Vert F(x^*)\Vert_v \leq \gamma \Vert x - x^*\Vert_v+ \Vert x^*\Vert_v \leq \gamma\Vert x\Vert_v+(1+\gamma)\Vert x^*\Vert_v.$$
In other words, Assumption~\ref{assump:contraction}(b) holds with $\operatorbound = (1+\gamma)\Vert x^*\Vert_v$. Let $\bar{x} = \frac{1}{1-\gamma}((1+\gamma)\Vert x^*\Vert_v + \frac{\bar{w}}{\vmin}) $. We prove $\Vert x(t)\Vert_v \leq \bar{x}$ by induction.  The statement is obviously true for $t=0$ as $x(0)$ is initialized to be the all-zero vector. Suppose it is true for $t$, then
    \begin{align*}
            \Vert x(t+1)\Vert_v &\leq \max(\frac{1}{v_{i_t}}|x_{i_t}(t+1)|, \Vert x(t)\Vert_v)\\
          & \leq \max(\frac{1}{v_{i_t}}|x_{i_t}(t+1)|, \bar{x}).
    \end{align*}
    Then, notice that,
    \begin{align*}
        \frac{1}{v_{i_t} }|x_{i_t}(t+1)| &\leq (1-\alpha_t) \frac{1}{v_{i_t}} |x_{i_t}(t)| + \alpha_t(   \frac{1}{v_{i_t} }|F_{i_t}(x(t))| +  \frac{1}{v_{i_t} } |w(t)| ) \\
        &\leq (1-\alpha_t) \Vert x(t)\Vert_v + \alpha_t( \Vert F(x(t))\Vert_v +  \frac{1}{\vmin}\bar{w} )\\
        &\leq  (1-\alpha_t) \Vert x(t)\Vert_v + \alpha_t( \gamma\Vert x(t)\Vert_v + \operatorbound + \frac{1}{\vmin} \bar{w} )\\
        &\leq (1-\alpha_t) \bar{x}+ \alpha_t( \gamma \bar{x} + \operatorbound + \bar{w} )\\
        &= \bar{x},
    \end{align*}
    where in the second inequality, we have used $|w(t)|\leq \bar{w}$ almost surely (cf. Assumption~\ref{assump:martingale}), and in the last equality, we have used that $\gamma\bar{x} + C + \frac{\bar{w}}{\vmin} = \bar{x} $. This finishes the induction. \qed

\subsection{Proof of Proposition~\ref{prop:q_ub}}\label{appendix:q_ub}
We prove $\Vert Q(t)\Vert_\infty\leq \frac{\bar{r}}{1-\gamma}$ by induction. Firstly, the statement is true for $t=0$ as $Q(0)$ is initialized to be the all zero table. Then, assume the statement is true for $t$. For $t+1$, clearly $\Vert Q(t+1)\Vert_\infty \leq \max(\Vert Q(t)\Vert_\infty, |Q_{s_t,a_t}(t+1)|)$. Further, notice,
\begin{align*}
    |Q_{s_t,a_t}(t+1)| &\leq (1-\alpha_t)|Q_{s_t,a_t}(t)| + \alpha_t(|r_t| + \gamma |\max_{a} Q_{s_{t+1},a}(t) |)\\
    &\leq (1-\alpha_t) \Vert Q(t)\Vert_\infty + \alpha_t (\bar{r} + \gamma \Vert Q(t)\Vert_\infty)\\
    &\leq (1-\alpha_t) \frac{\bar{r}}{1-\gamma} + \alpha_t(\bar{r} + \gamma \frac{\bar{r}}{1-\gamma}) \\
    &= \frac{\bar{r}}{1-\gamma}.
\end{align*}
This finishes the induction, and hence $\Vert Q(t)\Vert_\infty\leq \frac{\bar{r}}{1-\gamma}$ almost surely for all $t\geq 0$. As $Q^*$ is the $Q$-function under an optimal policy $\pi^*$, we get for any $s\in\mathcal{S},a\in\mathcal{A}$,
$$|Q^*_{s,a}| = \E_{\pi^*}[\sum_{t=0}^\infty \gamma^t r_t| (s_0,a_0) = (s,a)]\leq  \sum_{t=0}^\infty \gamma^t \bar{r} = \frac{\bar{r}}{1-\gamma},$$
which concludes the proof of part (a). For part (b), notice,
\begin{align*}
    |w(t)| &\leq |r_t| + \gamma |\max_{a} Q_{s_{t+1},a}(t) | + |F_{s_t,a_t}(Q(t))| \\
    &\leq \bar{r} + \gamma \Vert Q(t)\Vert_\infty + \Vert F(Q(t))\Vert_\infty \\
    &\leq 2 (\bar{r} + \gamma \Vert Q(t)\Vert_\infty) \\
    &\leq 2 (\bar{r} + \gamma \frac{\bar{r}}{1-\gamma})  = \frac{2\bar{r}}{1-\gamma},
\end{align*}
which finishes the proof of part (b).\qed
\section{Proofs of Auxiliary Lemmas in Section~\ref{sec:convergence}}
\subsection{Proof of Lemma~\ref{lem:error_recursive} (Error Decomposition)}\label{subsec:error_recursive}
    By \eqref{eq:x_sum_update}, we have,
    \begin{align}
       & \Vert x(t+1) - x^*\Vert_v \nonumber \\
       &\leq \sup_i  \frac{1}{v_i} \Big|  \tilde{b}_{\tau-1,t,i}x_i(\tau) + \sum_{k=\tau}^{t }  b_{k,t,i} F_i(x(k))  - x_i^*\Big|   + 
\Vert \sum_{k=\tau}^{t}  \alpha_k \tilde{B}_{k,t} \epsilon(k)\Vert_v + \Vert \sum_{k=\tau}^{t}  \alpha_k \tilde{B}_{k,t}  \phi(k)\Vert_v. \label{eq:contraction_hatq_recursive_componentwise}
    \end{align}
    Notice that by \eqref{eq:B_sum_I}, for each $i$, $\tilde{b}_{\tau-1,t,i} +\sum_{k=\tau}^{t}  b_{k,t,i} =1$. Then, for each $i$, we have 
\begin{align*}
 \frac{1}{v_i}\Big|  \tilde{b}_{\tau-1,t,i}x_i(\tau) + \sum_{k=\tau}^{t }  b_{k,t,i} F_i(x(k))  - x_i^*\Big|
&\leq \tilde{b}_{\tau-1,t,i} \frac{1}{v_i}|x_i(\tau) - x_i^*| + \sum_{k=\tau}^{t}  b_{k,t,i}   \frac{1}{v_i}\big| F_i(x(k)) - x_i^* \big|  \nonumber \\
&\leq \tilde{b}_{\tau-1,t,i} \Vert x(\tau) - x^*\Vert_v + \sum_{k=\tau}^{t}  b_{k,t,i}   \Vert F(x(k)) - x^* \Vert_v  \nonumber \\
&\leq \tilde{\beta}_{\tau-1,t}  \Vert x(\tau) - x^*\Vert_v + \gamma \sum_{k=\tau}^{t}  b_{k,t,i}   \Vert x(k) - x^* \Vert_v, 
\end{align*}
where in the last inequality, we have used that $F$ is $\gamma$-contraction in $\Vert \cdot\Vert_v$ with fixed point $x^*$. 
Combining the above with \eqref{eq:contraction_hatq_recursive_componentwise}, we have,
\begin{align*}
 & a_{t+1}=  \Vert x(t+1) -x^*\Vert_v \\
  &\leq \tilde{\beta}_{\tau-1,t}a_\tau  + \gamma \sup_{i\in \statespace} \sum_{k=\tau}^{t}  b_{k,t,i}   a_k   +  \big \Vert \sum_{k=\tau}^{t}  \alpha_k \tilde{B}_{k,t} \epsilon(k) \big\Vert_v  + \big\Vert \sum_{k=\tau}^{t}  \alpha_k \tilde{B}_{k,t}  \phi(k) \big \Vert_v.
\end{align*}
\qed

\subsection{Proof of Lemma~\ref{lem:bounded} (Bounds on $\Vert\epsilon(t)\Vert_v$ and $\Vert \phi(t)\Vert_v$)}\label{appendix:sec:bounded}

    For part (a), we have, 
    \begin{align*}
        \Vert \epsilon(t)\Vert_v &= \Vert (\mathbf{e}_{i_t}\mathbf{e}_{i_t}^\top - D_t) [F(x(t-\tau)) -x(t-\tau) ]  + \martingale(t) \mathbf{e}_{i_t} \Vert_v\\
        &\leq \Vert \mathbf{e}_{i_t}\mathbf{e}_{i_t}^\top - D_t \Vert_v \Vert F(x(t-\tau)) -x(t-\tau) )\Vert_v  + |w(t)| \Vert \mathbf{e}_{i_t} \Vert_v \\
        &\leq \Vert F(x(t-\tau)) \Vert_v + \Vert x(t-\tau)\Vert_v + \frac{\bar{w}}{\vmin }\\
        &\leq 2 \bar{x} +\operatorbound+ \frac{\bar{w}}{\vmin }  := \bar{\epsilon}.
    \end{align*}
    where we have used by Proposition~\ref{prop:vnorm}, $\Vert \mathbf{e}_{i_t}\mathbf{e}_{i_t}^\top - D_t \Vert_v = \sup_i | \mathbf{1}(i_t=i) - d_{t,i} | \leq 1$ (here $\mathbf{1}$ is the indicator function); and $\Vert F(x(t-\tau))\Vert_v \leq \gamma\Vert x(t-\tau)\Vert_v + \operatorbound \leq \bar{x} + \operatorbound$.

    For part (b), we have, 
    \begin{align*}
        \Vert \phi(t)\Vert_v &= \Vert (\mathbf{e}_{i_t}\mathbf{e}_{i_t}^\top -D_t) (F(x(t))-F(x(t-\tau))) - (\mathbf{e}_{i_t}\mathbf{e}_{i_t}^\top - D_t) (x(t) - x(t-\tau))  \Vert_v\\
        &\leq \Vert F(x(t))-F(x(t-\tau))\Vert_v +  \Vert x(t) - x(t-\tau)\Vert_v \\
        &\leq 2\Vert x(t) - x(t-\tau)\Vert_v.
    \end{align*}
    Notice that $\Vert x(t) - x(t-1)\Vert_v \leq \alpha_{t-1} (\Vert F(x(t-1))\Vert_v + \Vert x(t-1)\Vert_v + \frac{1}{\vmin} \bar{w} )\leq \alpha_{t-1} (2\bar{x} +\operatorbound+ \frac{1}{\vmin} \bar{w} )= \alpha_{t-1}\bar{\epsilon}$. Summing up, we get
    $$\Vert \phi(t)\Vert_v \leq  \sum_{k=t-\tau+1}^t 2\bar{\epsilon} \alpha_{k-1}$$\qed

\subsection{Proof of Lemma~\ref{lem:stepsize} (Step Sizes)} \label{appendix:sec:stepsize}
For part (a), notice that $\log(1-x)\leq -x$ for all $x<1$. Then, 
        $$ (1 - \sigma\alpha_t) = e^{\log( 1 - \frac{\sigma h}{t+t_0})} \leq e^{- \frac{\sigma h}{t+t_0}}. $$
Therefore,
    \begin{align*}
        \prod_{\ell=k+1}^{t}(1-\sigma \alpha_\ell)& \leq e^{-\sum_{\ell=k+1}^{t}  \frac{\sigma h}{\ell+t_0}}\\
        &\leq e^{-\int_{k+1}^{t+1}  \frac{\sigma h}{y+t_0} d y} \\
        &= e^{-\sigma h \log(\frac{t+1+t_0}{k+1+t_0})}\\
        &= \Big( \frac{k+1+t_0}{t+1+t_0}\Big)^{\sigma h},
    \end{align*}
    which leads to the bound on $\beta_{k,t}$ and $\tilde{\beta}_{k,t}$.
    
    For part (b), 
    $$\beta_{k,t}^2 \leq  \frac{h^2}{(t+1+t_0)^{2\sigma h}} \frac{(k+1+t_0)^{2\sigma h}}{(k+t_0)^2} \leq \frac{2 h^2}{(t+1+t_0)^{2\sigma h}} (k+t_0)^{2\sigma h- 2} ,$$
    where we have used $(k+1+t_0)^{2\sigma h} \leq  2 (k+t_0)^{2\sigma h}$, which is true when $t_0 \geq 4 h$. Then,
    \begin{align*}
        \sum_{k=1}^{t}\beta_{k,t}^2&\leq \frac{2 h^2}{(t+1+t_0)^{2\sigma h}}  \sum_{k=1}^{t} (k+t_0)^{2\sigma h- 2}\leq  \frac{2 h^2}{(t+1+t_0)^{2\sigma h}} \int_{1}^{t+1} (y+t_0)^{2\sigma h- 2} dy\\
    &< \frac{2 h^2}{(t+1+t_0)^{2\sigma h}} \frac{1}{2\sigma h - 1} (t+1+t_0)^{2\sigma h - 1} < \frac{2h}{\sigma } \frac{1}{t+1+t_0},
    \end{align*}
    where in the last inequality we have used $2\sigma h-1 > \sigma h$. 
    
    For part (c), notice that for $k-\tau \leq \ell\leq k-1 $ where $k\geq \tau$, we have $\alpha_\ell \leq \frac{h}{k-\tau+ t_0} \leq \frac{2h}{k+t_0}$ (using $t_0\geq \tau$). Then,
    \begin{align*}
        \sum_{k=\tau}^{t}\beta_{k,t}\sum_{\ell = k-\tau}^{k-1} \alpha_{\ell}  &\leq  \sum_{k=\tau}^{t}\beta_{k,t} \frac{2h\tau}{k+t_0}\leq \sum_{k=\tau}^{t} \frac{h}{k+t_0} \Big( \frac{k+1+t_0}{t+1+t_0}\Big)^{\sigma h} \frac{2h\tau}{k+t_0}\\
        &\leq \sum_{k=\tau}^{t} \frac{4h^2\tau}{(t+1+t_0)^{\sigma h}} (k+t_0)^{\sigma h - 2}\\
        &\leq \frac{4h^2\tau}{(t+1+t_0)^{\sigma h}} \frac{(t+1+t_0)^{\sigma h -1}}{\sigma h -1}\\
        &\leq \frac{8h\tau}{\sigma} \frac{1}{t+1+t_0},
    \end{align*}
    where we have used $(k+1+t_0)^{\sigma h}\leq 2(k+t_0)^{\sigma h}$, and $\sigma h-1 > \frac{1}{2} \sigma h$. 
\qed

\subsection{Proof of Lemma~\ref{lem:azuma} (Azuma Hoeffding)} \label{appendix:sec:azuma}
       	Let $\ell$ be an integer between $0$ and $\tau-1$. For each $\ell$, define process $Y^\ell_k = X_{\tau k + \ell}$, scalar $\bar{Y}^\ell_k = \bar{X}_{k\tau+\ell}$, and define Filtration $\tilde{\mathcal{F}}_k^\ell = \mathcal{F}_{\tau k+\ell}$. Then, $Y_k^\ell$ is $\tilde{\mathcal{F}}_k^\ell$-adapted, and satisfies
       	$$ \E Y_k^\ell | \tilde{\mathcal{F}}_{k-1}^\ell = \E X_{k\tau + \ell}| \mathcal{F}_{k\tau+\ell - \tau} = 0.$$
       	Therefore, applying Azuma-Hoeffding bound on $Y_{k}^\ell$, we have
       	$$P( |\sum_{k: k\tau + \ell\leq t} Y_{k}^\ell| \geq t ) \leq 2  \exp(-\frac{t^2}{2 \sum_{k:k\tau+\ell\leq t} (\bar{Y}^\ell_{k})^2}),$$
       	i.e. with probability at least $1 - \frac{\delta}{\tau}$,
       	$$|\sum_{k: k\tau + \ell\leq t} X_{k\tau+\ell}|=  |\sum_{k: k\tau + \ell\leq t} Y_{k}^\ell| \leq \sqrt{ 2  \sum_{k:k\tau+\ell\leq t} \bar{X}_{k\tau+\ell}^2 \log(\frac{2\tau}{\delta}) }.$$
       	Using the union bound for $\ell = 0,\ldots,\tau-1$, we get that with probability at least $1-\delta$,
       	$$|\sum_{k=0}^t X_{t}| \leq \sum_{\ell=0}^{\tau-1}|\sum_{k: k\tau + \ell\leq t} X_{k\tau+\ell}| \leq \sum_{\ell=0}^{\tau -1} \sqrt{ 2  \sum_{k:k\tau+\ell\leq t} \bar{X}_{k\tau+\ell}^2 \log(\frac{2\tau}{\delta}) }\leq  \sqrt{ 2\tau  \sum_{k=0}^t \bar{X}_{k}^2 \log(\frac{2\tau}{\delta}) },$$
    where the last inequality is due to Cauchy-Schwarz.
\qed

\subsection{Proof of Lemma~\ref{lem:martingale_upperbound_beta}} \label{subsec:martingale_upperbound_beta}

    Let $p_k$ be a scalar sequence defined as follows. Set $p_\tau=0$, and $$p_{k} = (1 - \alpha_{k-1} d_{k-1,i} )p_{k-1} + \alpha_{k-1} \epsilon_i(k-1) .$$
    Then 
    $p_{t+1} = \sum_{k=\tau}^{t} \alpha_{k} \epsilon_i(k) \prod_{\ell=k+1}^{t} (1- \alpha_\ell d_{\ell,i}) $, and to prove Lemma~\ref{lem:martingale_upperbound_beta} we need to bound $|p_{t+1}|$. 
    Let $$k_0 = \sup \{k\leq t: (1 - \alpha_{k} d_{k,i} )|p_{k}| \leq \alpha_{k} |\epsilon_i(k) |\}.$$
     We must have $k_0\geq \tau$ since $|p_\tau| = 0$. 
    With $k_0$ defined, we now define another scalar sequence $\tilde{p}$ s.t. $\tilde{p}_{k_0+1} = p_{k_0+1}$ and 
    $$\tilde{p}_{k} = (1 - \alpha_{k-1}\sigma)\tilde{p}_{k-1} + \alpha_{k-1} \epsilon_i(k-1) .$$
    We claim that for all $k\geq k_0+1$, $p_k$ and $\tilde{p}_{k}$ have the same sign, and $|p_k| \leq |\tilde p_{k}|$. This is obviously true for $k=k_0+1$. Suppose it is true for for $k-1$. Without loss of generality, suppose both $p_{k-1}$ and $\tilde{p}_{k-1}$ are non-negative. Since $k-1>k_0$ and by the definition of $k_0$, we must have
    $$ (1 - \alpha_{k-1} d_{k-1,i})p_{k-1} > |\alpha_{k-1} \epsilon_i(k-1)|. $$
    Therefore, $p_k>0$. Further, since $d_{k-1,i}\geq \sigma$, we also have
    $$(1 - \alpha_{k-1} \sigma)\tilde{p}_{k-1}\geq (1 - \alpha_{k-1} d_{k-1,i})p_{k-1} > |\alpha_{k-1} \epsilon_i(k-1)| . $$
    These imply $ \tilde{p}_k\geq p_k >0$. The case where both $p_{k-1}$ and $\tilde{p}_{k-1}$ are negative is similar. This finishes the induction, and as a result, $|p_{t+1}|\leq |\tilde{p}_{t+1}|$.
    Notice, 
    $$\tilde{p}_{t+1} = \sum_{k=k_0+1}^{t} \alpha_k \epsilon_i(k)\prod_{\ell=k+1}^{t}(1-\alpha_\ell \sigma) + \tilde{p}_{k_0+1}\prod_{\ell=k_0+1}^{t}(1-\alpha_\ell \sigma)= \sum_{k=k_0+1}^{t}  \epsilon_i(k) \beta_{k,t} + \tilde{p}_{k_0+1}\tilde{\beta}_{k_0,t}.$$
    By the definition of $k_0$, we have $$|\tilde{p}_{k_0+1}|=|p_{k_0+1}| \leq (1-\alpha_{k_0}d_{k_0,i})|p_{k_0}| + \alpha_{k_0} |\epsilon_i(k_0)| \leq 2\alpha_{k_0} |\epsilon_i(k_0)| \leq 2\alpha_{k_0} \bar{\epsilon}v_i,$$ 
 where in the last step, we have used the upper bound on $\Vert \epsilon(k_0)\Vert_v$ in Lemma~\ref{lem:bounded} (a). As a result, 
    \begin{align*}
        |p_{t+1}| & \leq |\tilde{p}_{t+1}| \leq \big|\sum_{k=k_0+1}^{t}  \epsilon_i(k)\beta_{k,t}\big| + \big|\tilde{p}_{k_0+1}\tilde{\beta}_{k_0,t}\big|\\
        &\leq \big|\sum_{k=k_0+1}^{t}  \epsilon_i(k)\beta_{k,t}\big| + \big|2 \alpha_{k_0}\bar{\epsilon}v_i \tilde{\beta}_{k_0,t}\big|\\
        &= \big|\sum_{k=k_0+1}^{t}  \epsilon_i(k)\beta_{k,t}\big| + 2 \bar{\epsilon}v_i {\beta}_{k_0,t}.
    \end{align*}
\qed

\subsection{Proof of Lemma~\ref{lem:b_kt_bound}}\label{appendix:critic:subsec:b_kt_bound}

Throughout the proof, we fix $i$ and will frequently use the property $d_{k,i}\geq \sigma$ which holds almost surely. Define the sequence 
$$e_t =\sum_{k=\tau}^{t}  b_{k,t,i} \frac{1}{{(k+t_0)^\omega}}. $$
We use induction to show that $e_t \leq \frac{1}{ \sqrt{\gamma}{(t+1+t_0)^\omega}}$. The statement is clearly true for $t=\tau$, as $e_{\tau} =  b_{\tau,\tau,i} \frac{1}{{(\tau+t_0)^\omega}}= \alpha_\tau d_{\tau,i} \frac{1}{{(\tau+t_0)^\omega}} \leq  \frac{1}{\sqrt{\gamma}(\tau+1+t_0)^\omega}$ (the last step needs $\alpha_\tau \leq \frac{1}{2}, (1+\frac{1}{t_0})^\omega\leq \frac{2}{\sqrt{\gamma}} $, implied by $t_0\geq 1$, $\omega\leq 1$). Let the statement be true for $t-1$. Then, notice that,
\begin{align*}
e_t &=\sum_{k=\tau}^{t-1}  b_{k,t,i} \frac{1}{(k+t_0)^\omega} + b_{t,t,i} \frac{1}{{(t+t_0)^\omega}} \\
&= (1-\alpha_{t}d_{t,i}) \sum_{k=\tau}^{t-1}  b_{k,t-1,i} \frac{1}{(k+t_0)^\omega} + \alpha_{t}d_{t,i} \frac{1}{{(t+t_0)^\omega}}\\
&= (1-\alpha_{t}d_{t,i})  e_{t-1}+ \alpha_{t}d_{t,i} \frac{1}{{(t+t_0)^\omega}}\\
&\leq (1-\alpha_{t}d_{t,i})  \frac{1}{\sqrt{\gamma}{(t+t_0)}^\omega} + \alpha_{t}d_{t,i}  \frac{1}{{(t+t_0)^\omega}}\\
&= \Big[ 1 - \alpha_{t} d_{t,i} (1-\sqrt{\gamma})\Big]\frac{1}{\sqrt{\gamma}{(t+t_0)}^\omega} ,
\end{align*}
where the inequality is based on induction assumption. Then, plug in $\alpha_{t} = \frac{h}{t+t_0}$ and use $d_{t,i}\geq \sigma$, we have,
\begin{align*}
e_t&\leq \Big[ 1 - \frac{\sigma h}{t+t_0} (1-\sqrt{\gamma})\Big]\frac{1}{\sqrt{\gamma}{(t+t_0)}^\omega} \\
&= \Big[ 1 - \frac{\sigma h}{t+t_0} (1-\sqrt{\gamma})\Big] \Big(\frac{t+1+t_0}{t+t_0}\Big)^\omega \frac{1}{\sqrt{\gamma}{(t+1+t_0)}^\omega}\\
&= \Big[ 1 - \frac{\sigma h}{t+t_0} (1-\sqrt{\gamma})\Big] \Big( 1+ \frac{1}{t+t_0}\Big)^\omega \frac{1}{\sqrt{\gamma}{(t+1+t_0)}^\omega}.
\end{align*}
Now using the inequality that for any $x>-1$, $(1+x)\leq e^x$, we have,
\begin{align*}
\Big[ 1 - \frac{\sigma h}{t+t_0} (1-\sqrt{\gamma})\Big] \Big( 1+ \frac{1}{t+t_0}\Big)^\omega \leq  e^{- \frac{\sigma h}{t+t_0} (1-\sqrt{\gamma}) + \omega\frac{1}{t+t_0} } \leq 1,
\end{align*}
where in the last inequality, we have used $\omega\leq 1$ and the condition on $h$ s.t. $\sigma h (1-\sqrt{\gamma}) \geq 1$. This shows $e_t \leq  \frac{1}{\sqrt{\gamma}{(t+1+t_0)}^\omega}$ and finishes the induction. \qed

\end{document}